\journal{ }
\def\R{{\mathbb R}}
\def\dee{{\rm d}}
\def\e{{\rm e}}
\def\:{{\colon}}
\def\loc{{\rm loc}}
\numberwithin{equation}{section}
\newtheorem{theorem}{Theorem}[section]
\newtheorem{corollary}[theorem]{Corollary}
\newtheorem{lemma}[theorem]{Lemma}
\newtheorem{proposition}[theorem]{Proposition}
\newdefinition{remark}{Remark}[section]
\newdefinition{definition}{Definition}[section]
\newdefinition{example}{Example}[section]
\newcommand{\bel}{\begin{flalign}}
\newcommand{\eel}{\end{flalign}}
\newcommand{\be}{\begin{equation}}
\newcommand{\ee}{\end{equation}}
\newcommand{\ba}{\begin{eqnarray}}
\newcommand{\bs}{\begin{eqnarray*}}
\newcommand{\ea}{\end{eqnarray}}
\newcommand{\es}{\end{eqnarray*}}
\newcommand{\bi}{\begin{itemize}}
\newcommand{\ei}{\end{itemize}}
\newcommand{\A}{{\alpha}}
\newcommand{\Om}{{\Omega   }}
\newcommand{\Lap}{\Delta}
\newcommand{\B}{{\beta}}
\newcommand{\ITB}{\int_0^{T}}
\newcommand{\It}{\int_0^t}
\newcommand{\Is}{\int_{\R^n }}
\newcommand{\F}{{\mathscr F}}
\newcommand{\upperu}{\overline{u}}
\newcommand{\loweru}{\underline{u}}
\begin{document}

\begin{frontmatter}

\author[RL]{R.\ Laister\corref{cor}}
\ead{Robert.Laister@uwe.ac.uk}
\address[RL]{Department of Engineering Design and Mathematics, \\ University of the West of England, Bristol BS16 1QY, UK.}
\author[warsaw]{M.\ Sier{\.z}\polhk{e}ga}
\ead{M.Sierzega@mimuw.edu.pl}
\address[warsaw]{Faculty of Mathematics, Informatics and Mechanics,  University of Warsaw,\\  Banacha 2, 02-097 Warsaw, Poland.}
%
\cortext[cor]{Corresponding author}

\title{Well-posedness of  Semilinear Heat Equations in $L^1$}

\begin{abstract}
The problem of obtaining necessary and sufficient conditions for local existence of non-negative solutions in Lebesgue spaces for  semilinear heat equations having monotonically increasing source term $f$ has only recently been resolved  (Laister  et al. (2016)). There, for the more difficult case of initial data in $L^1$, a necessary and sufficient integral condition on $f$ emerged. Here, subject to this integral condition, we consider other fundamental properties of   solutions with $L^1$ initial data of indefinite sign, namely: uniqueness, regularity, continuous dependence and comparison. We also establish sufficient conditions for the global-in-time continuation of solutions for small initial data in  $L^1$.

\end{abstract}

\begin{keyword}
 heat equation \sep   existence\sep uniqueness\sep continuous dependence\sep comparison\sep global solution.
\end{keyword}
\end{frontmatter}


\section{Introduction}
In this paper we address fundamental questions concerning the well-posedness of semilinear heat equations of the form
\begin{equation}\label{nhe}
u_t=\Lap u+f(u),\qquad u(0)=\phi ,
\end{equation}
where  $f\colon\R\to\R$ is locally Lipschitz continuous and non-decreasing, $f(0)=0$ and the initial condition $\phi$ is taken in the space $L^1(\R^n )$.
In a previous work \cite{LRSV} we  established sharp results on the local existence of non-negative solutions when  $f\colon [0,\infty)\to[0,\infty)$ and   $\phi\in L^1_+(\R^n )$. Here we consider issues of existence and  uniqueness for  initial data of indefinite sign, thus providing a more comprehensive  account of the well-posedness problem for  \eqref{nhe}. Furthermore we obtain results on the regularity, continuous dependence, comparison and   global existence of solutions.


The special case of the homogeneous power law nonlinearity $f(u)=|u|^{p-1}u$ in (\ref{nhe}), commonly referred to as the Fujita equation, has attracted much attention. In fact, the original spark of interest   in this restricted setting \cite{Fuj66} has inspired the bulk of subsequent developments. The interested reader is invited to consult \cite{QS19} and the extensive list of references therein for a detailed account of the state of the art for this  problem.

A particularly important avenue of research concerns local well-posedness for {\em singular} (unbounded) initial data and {\em power-like} nonlinearities satisfying
\be
 |f(u)-f(v)|\leq C\left(1+|u|^{p-1}+|v|^{p-1}\right)|u-v|,\qquad p>1.\label{eq:power}
 \ee
Sufficient conditions for local existence in Lebesgue spaces for classes of nonlinearities of this type   were established via a contraction mapping argument in \cite{W79,W80} (see also \cite{BC}). There the interplay between  the power-like nonlinearity  and Lebesgue  norms produced a clear-cut characterisation of local existence in $L^q$  with respect to a particular  exponent $q^\ast=\frac{n(p-1)}{2}$.  Thus  when $q\ge 1$ and $q>q^\ast$, or $q=q^\ast>1$,  then for every $\phi\in L^q$ there exists a $T_\phi >0$ and a solution in
 $C([0,T_{\phi}], L^q)\cap L^\infty_{\loc}((0,T_{\phi}),L^\infty)$, i.e. a   {\em classical $L^q$-solution}. The special case of the pure power law $f(u)=|u|^{p-1}u$ shows that $q^\ast$ is indeed a critical value as the problem can exhibit non-existence of non-negative solutions  whenever $1\le q<q^\ast$ \cite[Theorem 5]{W80}. Lastly, the end-point case $q=q^\ast =1$ differs from the critical regime for $q_\ast>1$ in that
 there are non-negative data for which no non-negative solution may be defined  \cite[Theorem 11]{BC} and \cite{CZ}.


Uniqueness results were also established in \cite{W80}, but in a restricted  class of functions where solutions satisfy a certain growth
 bound as $t\to 0$ {\em a priori}.  This growth restriction was removed in  \cite{BC}, providing a stronger uniqueness result in the space
 of classical $L^q$-solutions.  Hitherto these were the best results available regarding local well-posedness of (\ref{nhe}) in $L^q$.


For non-power-like nonlinearities  this tight correspondence between the source term growth and integrability of the initial data generally fails and two problems arise:
\begin{enumerate}[(A)]
\item given $f$, characterise the set $X$ of initial data for which \eqref{nhe} has a solution;
\item given the set $X$ of initial data, characterise those sources $f$ for which \eqref{nhe} has a solution  for every initial datum in $X$. 
\end{enumerate}
These two problems are of course subject to a variety of further ramifications,  e.g.,we need to specify the solution concept and then we may insist that $X$ is a Banach space say, or that $f$ is a smooth function, etc. 

One definitive result in direction (A), concerning non-negative solutions of the Fujita equation on bounded domains with Dirichlet boundary conditions, was given in \cite{BP}. With regards to problem (B), in  \cite{LRSV}  we gave a necessary and sufficient condition for a non-negative, {\em non-decreasing}, continuous function $f$ to give rise to a local solution bounded in $L^q_+(\R^n )$ for all  initial data in $L^q_+(\R^n )$. 
Specifically, for $q>1$, and assuming $f$ to be Lipschitz continuous at zero, local existence holds for all $\phi\in L^q_+(\R^n )$   if and only if
\bs\label{eq:introLq}
\limsup_{u\to\infty}\frac{f(u)}{u^{1+2 q/n}}<\infty .
\es
Morally, this result means that the power-like nonlinearities of (\ref{eq:power}) are essentially the whole story  with regards to   local existence in $L_{+}^q(\R^n)$ when $q>1$, i.e. it is precisely those nonlinearities which are majorised by $u^{1+2 q/n}$ which are compatible with Lebesgue spaces of non-negative initial data. 
However,  power-like nonlinearities   certainly do not tell the whole story in the  more delicate  case $q=1$.  For then the condition  for existence 
\be
|f(u)-f(v)|\leq C\left(1+|u|^{p-1}+|v|^{p-1}\right)|u-v|,\qquad p<1+2/n\label{eq:IntroLq}
\ee
in \cite{BC,W80} is sufficient  but not necessary. In fact the optimal (`critical') condition obtained in  \cite{LRSV} for the local existence  property in $L^1_+(\R^n )$  reads 
\begin{equation}\label{eq:withFs}
 \int_1^\infty s^{-(1+2 /n)}\ell_+ (s)\,\dee s<\infty,
\end{equation}
where
\bs
\ell_+ (s)=\sup_{0< t\le s}\frac{f(t)}{t}.
\label{eq:ellplusintro}
\es
(Note that $\ell_+ (s)$ was denoted by $F(s)$ in  \cite{LRSV}.) 
Thus there are nonlinearities $f$ which satisfy the integral condition (\ref{eq:withFs}) but which do not satisfy (\ref{eq:IntroLq}).
See  \cite[Section 4.4]{LRSV} for such an example in the bounded domain case, but which also applies to the whole space $\R^n $. We see  again that $L^1$ differs qualitatively from higher Lebesgue spaces and concepts like `critical nonlinearity' require careful treatment.

It is worth mentioning that the contraction mapping arguments in \cite{BC, W79,W80} do not require monotonicity of $f$. Whilst we obtain stronger results in terms of the growth of $f$ at infinity, our methods are reliant upon  monotonicity.  For some alternative, related results see also the  recent developments  in  \cite{FI}. 


It is natural then to ask whether the gap between the existence conditions (\ref{eq:IntroLq}) and \eqref{eq:withFs} might lead to improvements  of 
 current uniqueness results  when $q=1$, namely \cite[Theorem 1]{BC}.  Indeed, we show in Theorem~\ref{thm:unique} that this is  the case. Whereas uniqueness of classical $L^1$-solutions is  known to hold   under   (\ref{eq:IntroLq}),
we are able to weaken this condition accordingly; see hypothesis   {\bf  (I2)} in Section~\ref{sec:unique}. There we adapt some of the methods of 
\cite{BC} but without recourse to the contraction mapping theorem. It should be noted however,  that our choice of working with classical solutions (for $t>0$) with initial data in $L^1(\R^n)$ is instrumental in uniqueness considerations;  non-uniqueness can result otherwise \cite{MT,NS}.
Furthermore our monotone methods yield additional   results on continuous dependence and comparison in this more general setting,  see Theorem~\ref{thm:comp}, which  extend further those of \cite{BC}. 

We go on to consider some special classes of source terms and initial data. For example, when $f$ is  convex on $[0,\infty )$ and odd  
 then  the integral condition for existence (namely {\bf  (I1)} of Section~\ref{sec:exist}, similar in spirit to that of (\ref{eq:withFs})) is  equivalent to the one for both existence {\em and} uniqueness  (namely {\bf  (I2)}) - see Corollary~\ref{cor:convexI1I2}. Specialising further to the case of non-negative ititial data  then leads  to a sharp (necessary and sufficient) result, Corollary~\ref{cor:summary}, regarding local well-posedness of (\ref{nhe}) within the class of convex source terms on $[0,\infty )$  and initial data in  $L^1_+(\R^n)$.

In the context of   global solutions it has long been known  for convex sources $f$ satisfying the ODE blow-up criterion
$\int^\infty\dee s/f(s)<\infty$,  that blow-up in (\ref{nhe}) can occur for {large} initial data and that small initial data
can evolve into global-in-time solutions, see \cite{Kaplan}.
In the seminal paper \cite{Fuj66}  Fujita obtained an important result for the special case of $f(u)=u^p$ on the whole space, whereby  the critical exponent $p_F:=1+{2}/{n}$ separated two regimes: for $1<p<p_F$ every non-negative, non-trivial classical solution  blows up in finite time, whereas for $p> p_F$ it is possible to find {small} initial conditions evolving into global-in-time solutions. The so-called `critical case' $p=p_F$ was later shown to be in the blow-up regime \cite{Hay,Sug}.  Notions of smallness abound, but Fujita's original approach involved data bounded above by a small Gaussian.
Subsequent developments
defined smallness via a norm in an appropriate Banach space. The homogeneous power  nonlinearity $f(u)=|u|^{p-1}u$ is naturally compatible with the Lebesgue space structure and in \cite{W81} Weissler succeeded in replacing the small Gaussian bound with a small critical Lebesgue norm $\|\cdot \|_{{p_F}}$. In this paper too we establish some  global existence results for initial data with sufficiently small $L^1$-norm, see Theorems~\ref{thm:global} and \ref{thm:globalL1}.


Critical and supercritical phenomena appear often in semilinear heat equations   when the averaging action of  diffusion and the amplifying action of the nonlinear term exert a comparable infulence on the behaviour of the solution. The character of this inteplay may lead to subtle effects in the dynamics of singularity formation \cite{FHV}, existence of nonsmooth solutions \cite{PY} and asymptotic behaviour of global solutions \cite{FKWY}.  In the context of power law  source terms, a considerable body of work on these topics has been 
 developed over the past three decades  (see \cite{QS19} for an overview). In this work we have investigated the critical balance with regards to the question of local existence of solutions for source terms in a more general class of functions.  Whether similar generalisations are possible in the blow-up theory and threshold behaviour of global solutions remains to be seen
 and poses interesting research challenges for the future.


\section{Well-Posedness in $L^1$}

\subsection{Preliminaries}

Throughout we use $p_F$ to denote the Fujita exponent  $\displaystyle{p_F= 1+{2}/{n}}$.  We write $\| \cdot \|_q$ for the norm in $L^q(\R^n  )$ and denote by $\left\{ S(t)\right\}_{t\ge 0}$  the heat semigroup on $L^q(\R^n  )$ ($q\ge 1$) generated by $-\Lap$ on  $\R^n$ with  the explicit representation formula
 \bs
 [S(t)\phi](x)=\Is G(x-y,t)\phi (y)\, \dee  y,\qquad \phi\in L^q(\R^n  ),\label{eq:heatsg}
 \es
where $G$ is the Gaussian heat kernel
\bs
G(x,t)=(4\pi t)^{-n/2}\e^{-|x|^2/4t}.\label{eq:gauss}
\es
It is well-known  that $S(t)$ is a $C_0$-semigroup for all  $q\ge 1$ finite.

 As is commonplace for semilinear problems we will  study solutions of (\ref{nhe}) via the variation of constants formula
\begin{equation}
u(t)={\mathscr F}(u;\phi)\coloneqq S(t)\phi+\int_0^t S(t-s)f(u(s))\, \dee  s.\label{eq:VoC}
\end{equation}
We now make precise our solution concepts (see  e.g., \cite[Section 15]{QS19}), setting  $Q_T= \R^n\times (0,T)$.
\begin{definition}
\bi
\item[]
\item[(i)] We say that $u$ is a \emph{classical $L^1$-solution} of (\ref{nhe}) on $[0,T)$ if  $u$ satisfies (\ref{nhe}) in the classical sense in $Q_T$,
$u\in C\left([0,T),L^{1}(\R^n  )\right)\cap L^{\infty}_{\loc}\left((0,T),L^{\infty}(\R^n  )\right)$  and $u(0)=\phi$.
\item[(ii)] We say that (\ref{nhe}) is \emph{well-posed in $L^{1}(\R^n  )$} if for all $\phi\in L^{1}(\R^n  )$ there exists  $T=T(\phi )>0$ and a unique  classical $L^1$-solution of (\ref{nhe}) on $[0,T)$.
\ei
\label{def:wellposed}
\end{definition}

\begin{definition}
Let $T>0$. We say that a measurable, finite almost everywhere (a.e.) function $w\colon Q_T\to\R$ is an integral supersolution (respectively subsolution) of (\ref{nhe}) on $Q_T$ if $w$ satisfies ${\mathscr F}(w;\phi)\le w$ (respectively ${\mathscr F}(w;\phi)\ge w$) a.e. in $Q_T$, with $\F$  as in (\ref{eq:VoC}).
If $w$ is both an integral supersolution and  an integral subsolution of (\ref{nhe}) on $Q_T$ then we say that $w$ is  an integral solution of (\ref{nhe}) on $Q_T$.
\label{def:super}
\end{definition}
In what follows we will often  write `supersolution'  instead of `integral supersolution' whenever the context is clear, and likewise for `subsolution'. 


We recall the following standard smoothing estimate for the heat semigroup when $1\le q\le r\le\infty$ and $\phi\in L^q(\R^n )$ (see  e.g., \cite[Lemma 7]{BC}
or \cite[Proposition 48.4]{QS19}):
\be\label{eq:smoothing}
  \left\|S(t)\phi \right\|_{r}\le t^{-\frac{n}{2}\left(\frac{1}{q} -\frac{1}{r} \right)}\|\phi\|_q,\qquad t>0.
 \ee

The following lemma is a minor adaptation of  \cite[Lemma 8]{BC} for bounded domains to the whole space. Here we make use of the fact that 
$S(t)$ is a $C_0$-semigroup  to offer a slightly different  proof to the one given there.

\begin{lemma} \label{lem:smoothing}
Let  $1\le q < r\le\infty$ and $\displaystyle{\A =\frac{n}{2}\left(\frac{1}{q} -\frac{1}{r} \right)}$. If $\phi\in L^q(\R^n )$ then
$$\lim_{t\to 0} t^\A\left\|S(t)\phi \right\|_{r}=0.$$
\end{lemma}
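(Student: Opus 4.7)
My proposed approach is a density argument in which the density is supplied by the $C_0$-semigroup property itself: rather than approximating $\phi$ by simple functions or smooth compactly supported functions, I will approximate it by $S(\delta)\phi$ for small $\delta>0$, which already lies in $L^r(\R^n)$ thanks to the smoothing estimate (\ref{eq:smoothing}).

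First, given $\delta>0$, I set $\psi_\delta:=S(\delta)\phi$. Applying (\ref{eq:smoothing}) to $\phi$ shows that $\psi_\delta\in L^r(\R^n)$ with $\|\psi_\delta\|_r\le \delta^{-\A}\|\phi\|_q$, while strong continuity of $\{S(t)\}$ on $L^q(\R^n)$ yields $\|\phi-\psi_\delta\|_q\to 0$ as $\delta\to 0$.

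Next, I split $\phi=\psi_\delta+(\phi-\psi_\delta)$ and use the triangle inequality to obtain
\begin{equation*}
t^\A\|S(t)\phi\|_r\le t^\A\|S(t)\psi_\delta\|_r+t^\A\|S(t)(\phi-\psi_\delta)\|_r.
\end{equation*}
The second term is bounded by $\|\phi-\psi_\delta\|_q$ via (\ref{eq:smoothing}). For the first term, since $\psi_\delta\in L^r(\R^n)$, I use the contraction property $\|S(t)\psi_\delta\|_r\le\|\psi_\delta\|_r$ (valid for every $1\le r\le\infty$ because $G(\cdot,t)\ge 0$ and $\|G(\cdot,t)\|_1=1$) to bound $t^\A\|S(t)\psi_\delta\|_r\le t^\A\|\psi_\delta\|_r$. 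Because $q<r$ forces $\A>0$, this quantity tends to zero as $t\to 0^+$.

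Taking $\limsup$ as $t\to 0$ gives $\limsup_{t\to 0}\,t^\A\|S(t)\phi\|_r\le\|\phi-\psi_\delta\|_q$, after which sending $\delta\to 0$ concludes the proof. There is no genuine obstacle; the only point to keep an eye on is that the contraction bound $\|S(t)\|_{L^r\to L^r}\le 1$ must cover the endpoint $r=\infty$, which is immediate from the explicit representation of $S(t)$.
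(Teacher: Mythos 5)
Your proposal is correct and is essentially the paper's own argument: both decompose $\phi$ as $S(\varepsilon)\phi+(\phi-S(\varepsilon)\phi)$, control the second piece by the smoothing estimate and strong continuity, and bound the first piece by $t^{\A}\varepsilon^{-\A}\|\phi\|_q$ (you apply smoothing to $S(\varepsilon)$ first and then the $L^r$-contraction, the paper applies the $L^q$-contraction first and then smoothing, which yields the identical bound). No gaps.
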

\begin{proof}  For any $\varepsilon>0$, 
\bs
t^{\A}\left\|S(t)\phi \right\|_{r}& \le & t^{\A}\left\|S(t)(\phi - S(\varepsilon)\phi) \right\|_{r}+t^{\A}\left\|S(\varepsilon)S(t)\phi \right\|_{r}\\
& \le & \left\|\phi - S(\varepsilon)\phi \right\|_{q}+t^\alpha\varepsilon^{-\alpha}\left\|S(t)\phi \right\|_{q}\\
& \le & \left\|\phi - S(\varepsilon)\phi \right\|_{q}+t^\alpha\varepsilon^{-\alpha}\left\|\phi \right\|_{q}
\es
so that
\bs
\lim_{t\to 0} t^{\A}\left\|S(t)\phi \right\|_{r} \le  \left\|\phi - S(\varepsilon)\phi \right\|_{q}.
\es
Letting $\varepsilon\to 0$ and using the strong continuity of $S(\cdot )$ for $q\in [1,\infty )$, yields the result.
\end{proof}
\begin{remark}
The two ingredients needed in the proof of Lemma~\ref{lem:smoothing}, namely  the smoothing estimate and the strong continuity of the semigroup, may be generalised to the setting of  fractional power spaces $X^\alpha$, $\alpha\geq 0$,  associated with a sectorial operator $A$, such that  $A:D(A) \subset X^0\to  X^0$,  where $X^0$ is a Banach space. Then $\{S(t)\}_{t\geq 0}$ is an analytic semigroup generated by $A$ which,  in particular, satisfies the bounds  
\[
\|S(t)\phi\|_{X^\alpha}\leq M\|\phi\|_{X^\alpha}, \qquad \|S(t)\phi\|_{X^\alpha}\leq M t^{-\alpha}\|\phi\|_{X^0}, \qquad t> 0,
\]
for some constant $M>0$. See  e.g., \cite{AC} and the references therein.
\end{remark}



\subsection{Local Existence}\label{sec:exist}

We introduce the following  monotonicity hypothesis for $f$:
\begin{flalign*}
\quad\text{ \bf (M)}\quad f\colon\R\to\R \text{ is locally Lipschitz continuous, non-decreasing and } f(0)=0.&&
\end{flalign*}
For $f$ satisfying {\bf  (M)} we may define the  non-decreasing function  $\ell \colon [0,\infty )\to [0,\infty )$ by
\be
\ell (s)=\sup_{ 0<|t|\le s}\frac{f(t)}{t} \quad (s>0),\qquad \ell (0)=0.
\label{eq:ell}
\ee
Observe that by  {\bf  (M)} and   taking $t=s$ in (\ref{eq:ell}), we have
\be
f(s)\le s\ell(s),\qquad   s\ge 0\label{eq:fellplus}
\ee
and (taking $t=-s$),
\be
f(s)\ge s\ell(-s) ,\qquad s\le 0.\label{eq:fellminus}
\ee
We now consider the issue of existence  of solutions of (\ref{nhe}), for which we introduce our second hypothesis:
\begin{flalign*}
\quad\text{\bf  (I1)}\quad  \displaystyle{\int_1^{\infty}s^{-p_F}\ell (s)\,\dee s<\infty ,}&&
\end{flalign*}
where $p_F=1+2/n$. Observe that {\bf  (I1)} is equivalent to
\be
\int_{0}^1\ell \left(\tau^{-n/2}\right)\,\dee \tau <\infty .\label{eq:altI}
\ee

\begin{theorem}(Uniform Existence.)
Suppose $f$ satisfies {\bf  (M)} and {\bf  (I1)} and  $B$ is any bounded subset of $L^{1}(\R^n  )$. Then there exists a  $T_B>0$ such that for all $\phi\in B$ there exist
  classical $L^1$-solutions $\loweru (t;\phi)$ and $\upperu(t;\phi)$ of (\ref{nhe}) with $\loweru (t;\phi)\le \upperu(t;\phi)$ on $[0,T_B)$.
Furthermore, for all $\phi\in B$,
\[\lim_{t\to 0}t^{n/2}\| \loweru (t;\phi)\|_{\infty}=\lim_{t\to 0}t^{n/2}\| \upperu(t;\phi)\|_{\infty}=0.\]
\label{thm:exist}
\end{theorem}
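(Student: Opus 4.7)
The strategy is to \emph{symmetrise} the problem: build a single non-negative majorant $V$ via \cite{LRSV}, bracket the solution between $-V$ and $V$, and run monotone Picard iteration. The engine throughout is the order-preserving nature of $\F$: the monotonicity of $f$ from~{\bf (M)} together with the positivity of the heat semigroup $S(t)$ makes $\F(\cdot;\phi_1)\le\F(\cdot;\phi_2)$ whenever $\phi_1\le\phi_2$, and $\F(u_1;\phi)\le\F(u_2;\phi)$ whenever $u_1\le u_2$.

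To build $V$, I introduce the locally Lipschitz, non-decreasing function $g\colon[0,\infty)\to[0,\infty)$ defined by $g(u)=\max\{f(u),-f(-u)\}$. Since $-f(-\tau)/\tau=f(-\tau)/(-\tau)$ for $\tau>0$, a short computation gives
\[
\sup_{0<\tau\le s}\frac{g(\tau)}{\tau}=\sup_{0<|t|\le s}\frac{f(t)}{t}=\ell(s),
\]
so hypothesis~{\bf (I1)} for $f$ is exactly the integral condition of \cite{LRSV} for $g$. Applying the uniform local existence result there to the bounded set $\widetilde B=\{|\phi|:\phi\in B\}\subset L^1_+(\R^n)$ yields a common time $T_B>0$ and, for every $\phi\in B$, a non-negative classical $L^1$-solution $V(\cdot;\phi)$ on $[0,T_B)$ of $v_t=\Lap v+g(v)$ with $v(0)=|\phi|$, satisfying $t^{n/2}\|V(t;\phi)\|_\infty\to 0$ as $t\to 0$.

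Using the integral identity for $V$, the bounds $|\phi|\ge\phi$ and $g(V)\ge f(V)$, together with positivity of $S(t)$, I verify that $\upperu_0:=V$ is an integral supersolution of (\ref{nhe}). Substituting the same identity reduces the subsolution inequality $\loweru_0\le\F(\loweru_0;\phi)$ for $\loweru_0:=-V$ to
\[
-2\,S(t)\phi_+\le\int_0^t S(t-s)\bigl[f(-V(s))+g(V(s))\bigr]\,ds,
\]
which holds because $g(V)\ge-f(-V)$ by construction. I then iterate $\upperu_{k+1}=\F(\upperu_k;\phi)$ and $\loweru_{k+1}=\F(\loweru_k;\phi)$; order-preservation of $\F$ and induction yield
\[
-V=\loweru_0\le\loweru_1\le\cdots\le\loweru_k\le\upperu_k\le\cdots\le\upperu_1\le\upperu_0=V,
\]
so $\upperu_k\downarrow\upperu$ and $\loweru_k\uparrow\loweru$ pointwise, dominated by $V$. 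Dominated convergence passes the limit through $\F$, producing two integral solutions with $\loweru\le\upperu$ and $|\loweru|,|\upperu|\le V$.

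Finally, the envelope $|\loweru|,|\upperu|\le V$ transfers $V\in C([0,T_B),L^1)\cap L^\infty_{\loc}((0,T_B),L^\infty)$ and $t^{n/2}\|V(t)\|_\infty\to 0$ to $\loweru$ and $\upperu$, after which standard parabolic bootstrapping from $L^\infty_{\loc}$ upgrades the integral identity to classical solutions in $Q_{T_B}$. The principal technical point is the construction in Step~1: because~{\bf (I1)} is stated with the two-sided supremum $\ell$ while \cite{LRSV} is formulated for nonlinearities on $[0,\infty)$, one has to craft $g$ so as to hand the two-sided problem over to the one-sided machinery without losing either the integral condition or the uniformity over $B$. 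The remainder is order-theoretic bookkeeping in the vein of classical monotone iteration.
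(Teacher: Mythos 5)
Your argument is correct in outline and lands on the same monotone-iteration skeleton as the paper, but the bracketing pair is genuinely different. The paper never solves an auxiliary nonlinear problem: it takes the explicit pair $v(t)=2S(t)\phi^-$, $w(t)=2S(t)\phi^+$ and verifies directly, using $f(s)\le s\ell(s)$ for $s\ge0$, $f(s)\ge s\ell(-s)$ for $s\le0$, the $L^1$--$L^\infty$ smoothing estimate and the reformulation $\int_0^1\ell(\tau^{-n/2})\,\dee\tau<\infty$ of \textbf{(I1)}, that these are an integral super/subsolution pair up to a time $T_B$ depending only on $K=\sup_{\phi\in B}\|\phi\|_1$. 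You instead symmetrise $f$ into $g(u)=\max\{f(u),-f(-u)\}$, solve $v_t=\Lap v+g(v)$, $v(0)=|\phi|$, via \cite{LRSV}, and bracket between $\pm V$; your identities $\sup_{0<\tau\le s}g(\tau)/\tau=\ell(s)$ and $g(V)\ge\max\{f(V),-f(-V)\}$ do make $\pm V$ a valid super/subsolution pair, and this is an elegant way to hand the two-sided problem to the one-sided machinery. Two points need care, though. First, the uniformity of $T_B$ over $B$ is itself one of the things being established here (the paper explicitly says it is \emph{adapting} the proof of \cite{LRSV} to get a uniform interval); \cite{LRSV} gives existence for each non-negative datum, so you cannot simply quote a ``uniform local existence result'' for $\widetilde B$ --- you must reopen that construction and observe that its existence time depends only on $\||\phi|\|_1=\|\phi\|_1$, at which point you are essentially redoing the paper's explicit computation with $2S(t)|\phi|$ as supersolution. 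Second, domination $|\loweru|,|\upperu|\le V$ does not by itself transfer $C([0,T_B),L^1)$ to the limits; as in the paper, you need $f(\pm V)$ (or $g(V)$) in $L^1(Q_{T_B})$ --- which does follow, e.g.\ from the integral identity for $V$ and \textbf{(I1)} --- and then continuity comes from the variation-of-constants formula, not from the envelope. With those two steps filled in, your proof is complete; what the paper's explicit linear pair buys is that both the uniformity in $B$ and the integrability of $f(v),f(w)$ are immediate from the smoothing estimate, with no appeal to an auxiliary nonlinear existence theorem.
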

\begin{proof}  For non-negative $\phi$, local
 existence  was proved in \cite[Theorem 5.1 (ii)]{LRSV}  (see also \cite[Theorem 4.4]{LRSV}).  We adapt that proof  here  to obtain solutions for initial data of indefinite sign, on a  uniform interval of existence with respect to $B$. We will show  that there exists a $T_B>0$ such that for any $\phi\in B$ there is an integral supersolution $w(t)$ and  an integral subsolution $v(t)$ with $v\le w$. From this sub-supersolution pair we will then construct  iteratively  the solutions $\loweru $ and $\upperu$. 

Let
\bs
\phi^{-}=\min\{\phi,0\}\le 0, \qquad \phi^{+}=\max\{\phi,0\}\ge 0
\es
and for $t\ge 0$ set
\bs
v(t)=2 S(t)\phi^{-}\le 0, \qquad w(t)=2 S(t)\phi^{+}\ge 0.
\es
Clearly $v\le 0\le w$ by monotonicity of $S(t)$ and the ordering $\phi^{-}\le 0\le  \phi^{+}$.

As $B$ is bounded there is a constant  $K>0$ such that $\|\phi\|_1\le K$ for all $\phi\in B$.  Hence since $w\ge 0$,  $\ell $ is non-decreasing and recalling (\ref{eq:fellplus}),
\begin{align}
S(t-s) f(w(s))&=S(t-s) f(2 S(s)\phi^{+}) 
\le S(t-s)  \ell   \left(2S(s)\phi^{+}\right)   2 S(s)\phi^{+} \nonumber \\
&\le S(t-s) \ell   \left(\left\|2 S(s)\phi^{+}\right\|_{\infty}\right) 2 S(s)\phi^{+} 
\le \ell   \left(2 \|\phi^{+}\|_{1}s^{-n/2}\right)2 S(t)\phi^{+} \nonumber \\
&\le \ell   \left(2\|\phi\|_{1} s^{-n/2}\right) w(t)\leq \ell \left(2 Ks^{-n/2}\right) w(t)\nonumber
\end{align}
and so
\begin{align*}
{\mathscr F}(w;\phi)&=S(t)\phi+\int_0^t S(t-s) f(w(s))\,\dee s\\
&\le S(t)\phi^{+}+\left(\int_0^t \ell   \left(2K s^{-n/2}\right)\,\dee s\right) w(t)\\
&= \left(1/2 +(2 K)^{2 /n}\int_0^{t(2 K)^{-2/n}} \ell (\tau^{-n/2})\,\dee\tau\right) w(t).
\end{align*}
Consequently ${\mathscr F}(w;\phi)\le w$ whenever
\bs
w(t)\left(g(t)-1/2\right)\le  0,\label{ss3}
\es
where $g$ is given by
\be
g(t)=(2 K)^{2 /n}\int_0^{t(2 K)^{-2 /n}}\ell (\tau^{-n/2})\,\dee\tau .
\label{ss4}
\ee
Due to \eqref{eq:altI} we can choose $T_B>0$ such that $g(t)\leq \frac{1}{2}$ for all $t\in [0,T_B]$, i.e.  ${\mathscr F}(w;\phi)\le w$ on $ [0,T_B]$. Note in particular that the choice of $T_B$ depends only upon $K$, and hence $B$, but not upon $\phi$.

Now recall  (\ref{eq:fellminus}) and that $v\le 0$.  Setting $\hat{\phi}=-\phi^{-}\ge 0$ we obtain
\begin{align}
S(t-s) f(v(s))&=S(t-s) f(2 S(s)\phi^{-}) 
\ge S(t-s)  \ell   \left(2 S(s)\hat{\phi}\right)   2 S(s)\phi^{-} \nonumber \\
&\ge S(t-s) \ell   \left(\left\|2 S(s)\hat{\phi}\right\|_{\infty}\right) 2 S(s)\phi^{-} 
\ge \ell   \left(2 \|\hat{\phi}\|_{1}s^{-n/2}\right)2 S(t)\phi^{-} \nonumber \\
&\ge \ell   \left(2 \|\phi\|_{1}s^{-n/2}\right) v(t)
\ge \ell   \left(2 Ks^{-n/2}\right) v(t),\label{eq:red2}
\end{align}
so that
\begin{align*}
{\mathscr F}(v;\phi) &= S(t)\phi+\int_0^t S(t-s) f(v(s))\,\dee s\\
&\ge S(t)\phi^-+\left(\int_0^t \ell   \left(2Ks^{-n/2}\right)\,\dee s\right) v(t)\\
&= \left(1/2+\int_0^t \ell   \left(2 Ks^{-n/2}\right)\,\dee s\right) v(t)\\
&=\left(1/2+g(t)\right)v(t),
\end{align*}
recalling (\ref{ss4}).
Thus, ${\mathscr F}(v;\phi)\ge v$  provided that
\bs
v(t)\left(g(t)-1/2\right)\ge  0,\label{eq:lower}
\es
which evidently holds for all $t\in [0,T_B]$, as for $w$ above.

For notational convenience we now set $T=T_B$ in the remainder of the proof. For any $\phi\in B$ and  $t\in [0,T]$, one may then use $v$ and $w$ to construct  iteratively a  decreasing sequence $w_k(t;\phi )$ and  an increasing sequence $v_k(t;\phi )$  via the relations
\be
w_{k+1}(t;\phi )={\mathscr F}(w_k;\phi), \qquad w_0(t;\phi )=w(t)\label{eq:itupper}
\ee
and
\be
v_{k+1}(t;\phi )={\mathscr F}(v_k;\phi), \qquad v_0(t;\phi )=v(t).\label{eq:itlower}
\ee
Using the monotonicity of ${\mathscr F}$ (see, for example, \cite[Theorem 1]{RS2} or \cite[Theorem 1]{W81}) it is easy to verify by induction that for all $k$
\bs
v\le v_k\le v_{k+1}\le w_{k+1}\le w_k\le w.\label{eq:itorder}
\es
Clearly therefore, the  pointwise limits 
\bs
\lim_{k\to\infty}v_k(x,t;\phi)=:\loweru(x,t;\phi ),\qquad  \lim_{k\to\infty}w_k(x,t;\phi )=:\upperu (x,t;\phi )\label{eq:ptlims}
\es
exist and satisfy $v\le\loweru\le \upperu\le w$ a.e. in $Q_{T}$. Furthermore, $v,w\in L^1(Q_{T})$ since
\bs
\int_{Q_{T}} |w(x,t)|\, \dee x\dee t &=&\ITB\Is 2[S(t)\phi^+](x)\, \dee x\dee t
\le 2T\|\phi\|_1,\\
\int_{Q_{T}} |v(x,t)|\, \dee x\dee t &=&\ITB\Is -2[S(t)\phi^-](x)\, \dee x\dee t
\le 2T\|\phi\|_1.
\es
Clearly, both  $|\loweru| $ and  $|\upperu|$ are dominated by $\max\{ |v|,|w|\}$.  Hence by the dominated convergence theorem, $\loweru , \upperu\in L^1(Q_{T})$ and
\bs
\lim_{k\to\infty}\It\Is G(x-y,s)v_k(y,s)\, \dee y\dee s & = &\It\Is G(x-y,s)\loweru (y,s)\, \dee y\dee s
\es
and
\bs
\lim_{k\to\infty}\It\Is G(x-y,s)w_k(y,s)\, \dee y\dee s & = &\It\Is G(x-y,s) \upperu (y,s)\, \dee y\dee s.
\es
It follows that  $\loweru$ and $ \upperu$ are both integral solutions of  (\ref{nhe}), i.e. satisfy
\be
u(t)= S(t)\phi+\int_0^t S(t-s)f(u(s))\, \dee  s\label{eq:VoC1}
\ee
a.e. in $Q_{T}$. Furthermore, by the integral condition {\bf  (I1)} we  have that
\bs
\ITB\|f(v(s))\|_1\, \dee s & \le &\ITB \ell   \left(2\|\phi^{-}\|_{1}s^{-n/2}\right)\|2S(s)\phi^{-}\|_1 \, \dee s\\
&\le & 2\|\phi\|_1\ITB \ell   \left(2\|\phi\|_{1}s^{-n/2}\right) \, \dee s<\infty
\es
and
\bs
\ITB\|f(w(s))\|_1\, \dee s & \le &\ITB \ell   \left(2s^{-n/2}\|\phi^{+}\|_{1}\right)\|2S(s)\phi^{+}\|_1 \, \dee s\\
&\le & 2\|\phi\|_1\ITB \ell   \left(2\|\phi\|_{1}s^{-n/2}\right) \, \dee s<\infty .
\es
Hence $f( \upperu )\in L^1((0,T),L^1(\R^n ))$ since $f(v)\le f( \upperu)\le f(w)$. It follows that  $ \upperu\in C([0,T],L^1(\R^n ))$ by (\ref{eq:VoC1}).
Furthermore, since  $v,w\in L^\infty_{\rm loc}((0,T),L^\infty(\R^n ))$ we also  have $ \upperu\in L^\infty_{\rm loc}((0,T),L^\infty(\R^n ))$. As $f$ is Lipschitz  continuous, standard parabolic regularity theory now implies that $ \upperu\in C^{2,1}(Q_T)$ and is a classical solution of (\ref{nhe}).   Thus, $ \upperu$ is a classical $L^1$-solution of (\ref{nhe}) on $[0,T)$.

By exactly the same argument  we  also deduce that   $\loweru$ is a classical $L^1$-solution of (\ref{nhe}) on $[0,T)$, with $\loweru \le  \upperu$.

That $t^{n/2}\|  \upperu (t;\phi)\|_{\infty}\to 0$ and $t^{n/2}\| \loweru (t;\phi)\|_{\infty}\to 0$ as ${t\to 0}$  follows easily from the ordering
\bs
2S(t)\phi^-\le \loweru (t;\phi)\le  \upperu(t;\phi)\le 2S(t)\phi^+
\es
and Lemma~\ref{lem:smoothing} with $q=1$ and $r=\infty$.
\end{proof}

\begin{remark}
For notational convenience, in all that follows we simply write $T_\phi$ for $T_{\{ \phi\}}$ when $B=\{\phi\}$.
\label{rem:T}
\end{remark}


\subsection{Uniqueness}\label{sec:unique}

The proof of Theorem~\ref{thm:exist} provides a particular method for constructing ordered  classical $L^1$-solutions $\loweru$ and  $ \upperu$ of (\ref{nhe}) via a monotone iteration scheme. Under a stronger integral condition than {\bf  (I1)}, involving  the modulus of continuity of $f$, we will show that $\upperu $  is unique among \emph {all} classical $L^1$-solutions of (\ref{nhe}). 

For $f$ satisfying {\bf  (M)}, define the non-decreasing function  $L\colon [0,\infty )\to [0,\infty )$ by
\be
L(s)=\sup_{\substack{ |u|,|v|\le s, \\ u\neq v}}\frac{f(u)-f(v)}{u-v}\quad (s>0),\qquad L (0)=0.
\label{eq:L}
\ee
We introduce the  integral condition 
\begin{flalign*}
\quad\text{\bf  (I2)}\quad  \int_1^{\infty}s^{-p_F}L (s)\,\dee s<\infty.&&
\end{flalign*}

\begin{remark}
Observe that $L(s)\ge\ell (s)$ and so  {\bf  (I2)} implies {\bf  (I1)}. We also note  that {\bf  (I2)} is equivalent to
\be
\int_{0}^1L \left(\tau^{-n/2}\right)\,\dee \tau <\infty .\label{eq:altI2}
\ee
\label{rem:I2I1}
\end{remark}

\begin{proposition}
Let $\phi,\psi\in L^1(\R^n  )$, $B=\{ \phi,\psi\}$ and suppose that  $f$ satisfies {\bf  (M)} and  {\bf  (I2)}. Let $\upperu (t;\cdot )$ and $T_B$ be as in Theorem~\ref{thm:exist}.
 Then there exists $\tau =\tau (\phi,\psi) >0$ and a continuous, non-negative function $q(t)$ satisfying $q(t)\to 0$ as $t\to 0$, such that for all $t\in (0,\tau ]$
\bs
\left\|\upperu (t;\phi)-\upperu (t;\psi)\right\|_1+t^{n/2}\left\|\upperu (t;\phi)-\upperu (t;\psi)\right\|_{\infty} \le 2\left\|\phi-\psi\right\|_1 \e^{q(t)}.
\es
\label{prop:cd}
\end{proposition}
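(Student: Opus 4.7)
Write $U(t) := \upperu(t;\phi) - \upperu(t;\psi)$ and set
\[
M(t) := \|U(t)\|_1 + t^{n/2}\|U(t)\|_\infty.
\]
Subtracting the variation of constants formula (\ref{eq:VoC}) for $\upperu(\cdot;\phi)$ and $\upperu(\cdot;\psi)$ gives
\[
U(t) = S(t)(\phi-\psi) + \int_0^t S(t-s)\bigl[f(\upperu(s;\phi))-f(\upperu(s;\psi))\bigr]\,\dee s.
\]
By the definition (\ref{eq:L}) of $L$, the integrand satisfies $|f(\upperu(s;\phi))-f(\upperu(s;\psi))|\le L(R(s))|U(s)|$ with $R(s) = \max\{|\upperu(s;\phi)|,|\upperu(s;\psi)|\}$. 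Theorem~\ref{thm:exist} furnishes a function $\eta\colon (0,T_B]\to [0,\infty)$ with $\eta(s)\to 0$ as $s\to 0$ and $\|\upperu(s;\chi)\|_\infty\le \eta(s)s^{-n/2}$ for $\chi\in\{\phi,\psi\}$, so by monotonicity of $L$ the pointwise bound becomes $L(\eta(s)s^{-n/2})|U(s)|$.

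Taking $L^1$ norms and using contractivity of $S$ on $L^1$ yields
\[
\|U(t)\|_1\le \|\phi-\psi\|_1 + \int_0^t L(\eta(s)s^{-n/2})\|U(s)\|_1\,\dee s.
\]
For the $L^\infty$ bound, apply the smoothing estimate (\ref{eq:smoothing}) with $(q,r)=(1,\infty)$ to $S(t)(\phi-\psi)$, picking up the factor $t^{-n/2}\|\phi-\psi\|_1$, and split the nonlinear convolution at $s=t/2$: on $[0,t/2]$ use $L^1\to L^\infty$ smoothing with $(t-s)^{-n/2}\le 2^{n/2}t^{-n/2}$; on $[t/2,t]$ use $L^\infty$-contractivity together with $\|U(s)\|_\infty\le s^{-n/2}M(s)$ and $t^{n/2}/s^{n/2}\le 2^{n/2}$. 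The two halves combine into
\[
t^{n/2}\|U(t)\|_\infty\le \|\phi-\psi\|_1 + 2^{n/2}\int_0^t L(\eta(s)s^{-n/2})M(s)\,\dee s.
\]

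Adding the two estimates gives $M(t)\le 2\|\phi-\psi\|_1 + (1+2^{n/2})\int_0^t L(\eta(s)s^{-n/2})M(s)\,\dee s$, and Gronwall's inequality delivers the claimed bound with
\[
q(t) := (1+2^{n/2})\int_0^t L(\eta(s)s^{-n/2})\,\dee s.
\]
Choose $\tau\in (0,T_B]$ small enough that $\eta(s)\le 1$ on $[0,\tau]$; then $q(t)\le (1+2^{n/2})\int_0^t L(s^{-n/2})\,\dee s$, which is finite by the equivalent form (\ref{eq:altI2}) of (I2) and tends to $0$ as $t\to 0$ by absolute continuity of the integral. Continuity of $q$ on $(0,\tau]$ is automatic from local integrability of the integrand, and $q\ge 0$ is clear. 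The main obstacle is the $L^\infty$ estimate: the $(t-s)^{-n/2}$ singularity in the smoothing kernel must be tamed by the split-time argument and synchronised with the $L^1$ piece through the joint quantity $M(s)$. It is precisely this coupling — with the instantaneous-smoothing bound $\eta(s)s^{-n/2}$ from Theorem~\ref{thm:exist} converting the pointwise Lipschitz weight $L(|\upperu|)$ into $L(\eta(s)s^{-n/2})$ — that turns (I2) into an integrable kernel suitable for Gronwall.
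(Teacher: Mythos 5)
Your argument is correct and follows essentially the same route as the paper: the same $L^1$ estimate, the same splitting of the nonlinear term at $s=t/2$ for the weighted $L^\infty$ estimate (yielding the constant $k_n=1+2^{n/2}$), and the same Gronwall conclusion with $q(t)=k_n\int_0^t L\left(s^{-n/2}\right)\dee s$, your $\eta(s)\le 1$ normalisation being just the paper's choice of $\tau$ so that $\|\upperu(t;\cdot)\|_\infty\le t^{-n/2}$. The one point you gloss over, which the paper makes explicit, is that invoking the (singular) Gronwall lemma requires first checking that $M(t)$ is continuous on the closed interval $[0,\tau]$; this follows from parabolic regularity for $t>0$ together with the $t^{n/2}\|\upperu(t;\cdot)\|_\infty\to 0$ assertion of Theorem~\ref{thm:exist} at $t=0$.
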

\begin{proof}  To simplify notation  let   $u(t)=\upperu (t;\phi)$ and $v(t)=\upperu (t;\psi)$. By Theorem~\ref{thm:exist} there exists 
$\tau =\tau (\phi,\psi) \in (0,T_B)$ such  that
 \be
 \left\|u(t)\right\|_{\infty}\le t^{-n/2} \quad\text{and}\quad  \left\|v(t)\right\|_{\infty}\le t^{-n/2}\label{eq:uv}
 \ee
for all $t\in (0,\tau ]$.  For such $t$,
\begin{align}
\|u(t)-v(t)\|_1 & \le   \|S(t)(\phi-\psi)\|_1+ \It \left\|S(t-s)\left(f(u(s))-f(v(s))\right)\right\|_1\,\dee s\nonumber\\
&\le   \|\phi-\psi\|_1 +\It \left\|\frac{f(u(s))-f(v(s))}{u(s)-v(s)}\right\|_{\infty}\left\|S(t-s)\left(u(s)-v(s)\right)\right\|_1\,\dee s\nonumber\\
& \le  \|\phi-\psi\|_1 + \It L \left( s^{-n/2}\right)\left\|u(s)-v(s)\right\|_1\,\dee s\label{eq:L1}
\end{align}
by (\ref{eq:L}-\ref{eq:uv}). Next, we have
\begin{align}
\|u(t)-v(t)\|_{\infty}  &\le   \|S(t)(\phi-\psi)\|_{\infty}+\int_0^{t} \left\|S(t-s)\left(f(u(s))-f(v(s))\right)\right\|_{\infty}\,\dee s\nonumber\\
& \le  t^{-n/2}\|\phi-\psi\|_1+\int_{t/2}^t \left\|{f(u(s))-f(v(s))}\right\|_{\infty}\,\dee s\nonumber\\
&  +\int_0^{t/2} \left\|\frac{f(u(s))-f(v(s))}{u(s)-v(s)}\right\|_{\infty}\left\|S(t-s)(u(s)-v(s))\right\|_{\infty}\,\dee s\nonumber\\
& \le  t^{-n/2}\|\phi-\psi\|_1+ \int_{t/2}^t L \left(s^{-n/2} \right) \| {u(s)-v(s)}\|_{\infty}\,\dee s \nonumber\\
&   + \int_0^{t/2} L \left(s^{-n/2} \right) (t-s)^{-n/2}\| {u(s)-v(s)}\|_{1}\,\dee s\nonumber\\
& \le  t^{-n/2}\|\phi-\psi\|_1+(t/2)^{-n/2}\int_{t/2}^t L \left( s^{-n/2}\right) s^{n/2}\| {u(s)-v(s)}\|_{\infty}\,\dee s\nonumber\\
&  +(t/2)^{-n/2}\int_0^{t/2} L \left( s^{-n/2}\right)\| {u(s)-v(s)}\|_{1}\,\dee s\nonumber\\
& \le  t^{-n/2}\|\phi-\psi\|_1+(t/2)^{-n/2}\int_{0}^t L \left( s^{-n/2}\right) s^{n/2}\| {u(s)-v(s)}\|_{\infty}\,\dee s\nonumber\\
&   +(t/2)^{-n/2}\int_0^{t} L \left( s^{-n/2}\right)\| {u(s)-v(s)}\|_{1}\,\dee s.\label{eq:Linf}
\end{align}
Combining (\ref{eq:L1}-\ref{eq:Linf}) we obtain, for all $t\in (0,\tau ]$,
\begin{align}
 \|u(t)-v(t)\|_1& +t^{n/2}\|u(t)-v(t)\|_{\infty}   \le 2\|\phi-\psi\|_1 \nonumber\\
&+\int_0^{t} k_nL \left( s^{-n/2}\right)\left( \| {u(s)-v(s)}\|_{1}+ s^{n/2} \| {u(s)-v(s)}\|_{\infty}\right)\,\dee s,\label{eq:gron}
\end{align}
where $k_n=\left(1+2^{n/2}\right)$. 

Now define $y(t)$ on $[0,\tau ]$ by $y(0)=\|\phi-\psi\|_1$ and
\bs
y (t)=\|u(t)-v(t)\|_1 +t^{n/2}\|u(t)-v(t)\|_{\infty}, \quad t\in (0,\tau ].
 \es
 By assumption $u,v\in C\left([0,\tau ],L^{1}(\R^n  )\right)$ and are both classical solutions for $t>0$. By standard parabolic regularity results we have that
 $u,v\in  C\left((0,\tau ),L^{\infty}(\R^n  )\right)$,
  so that $y$ is continuous on $(0,\tau ]$. By Theorem~\ref{thm:exist} it  follows that $y$ is continuous on $[0,\tau ]$.
 Hence by (\ref{eq:gron}), {\bf  (I2)} and  the singular Gronwall inequality (see  e.g.,  \cite[Ch.XII, Theorem 4]{MPF}), it follows  that
\bs
\|u(t)-v(t)\|_1 +t^{n/2}\|u(t)-v(t)\|_{\infty}  & \le & 2\|\phi-\psi\|_1 \e^{q(t)}
\es
for all $t\in (0,\tau ]$, where
\bs
q(t)=k_n\int_0^t L \left( s^{-n/2}\right)\,\dee s.
\es
Clearly  $q$ is continuous with $q(t)\to 0$ as $t\to 0$ by {\bf  (I2)} (recalling (\ref{eq:altI2})), and the proof is complete.
\end{proof}

We are now in a position to establish our main uniqueness result.

\begin{theorem}(Uniqueness.)
Let $\phi\in L^1(\R^n  )$ and suppose that  $f$ satisfies {\bf  (M)} and  {\bf  (I2)}. Let $\upperu (t;\phi )$ and $T_{\phi}$ be as in Theorem~\ref{thm:exist}.
Then for any $T\le T_{\phi}$, $\upperu (t;\phi)$ is  the unique classical $L^1$-solution of (\ref{nhe}) on   $[0,T)$.
\label{thm:unique}
\end{theorem}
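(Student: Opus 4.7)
The plan is to fix $\phi \in L^1(\R^n)$ and show that any classical $L^1$-solution $u$ of (\ref{nhe}) on $[0,T)$ must coincide with the solution $\upperu(\cdot;\phi)$ constructed in Theorem~\ref{thm:exist}. The strategy is to adapt the argument of Proposition~\ref{prop:cd} to the pair $(u, \upperu(\cdot;\phi))$, after first endowing $u$ with the \emph{a priori} decay rate $t^{n/2}\|u(t)\|_\infty \to 0$ that is currently guaranteed only for the constructed solution.

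The principal technical obstacle is this preliminary decay estimate: that every classical $L^1$-solution $u$ on $[0,T)$ automatically satisfies
\bs
\lim_{t \to 0^+} t^{n/2}\|u(t)\|_\infty = 0.
\es
The definition furnishes only $u \in L^\infty_\loc((0,T), L^\infty(\R^n))$, which affords no rate as $t \to 0^+$. To extract one I would start from (\ref{eq:VoC}), bound the linear part $t^{n/2}\|S(t)\phi\|_\infty \to 0$ via Lemma~\ref{lem:smoothing}, and split the nonlinear convolution at $s = t/2$. On $[0, t/2]$ one uses the $L^1 \to L^\infty$ smoothing (\ref{eq:smoothing}) together with the pointwise bound $\|f(u(s))\|_1 \le \ell(\|u(s)\|_\infty)\|u(s)\|_1$ that follows from (M) and monotonicity of $\ell$; on $[t/2, t]$ one uses the $L^\infty$-contraction of $S(t)$ together with $\|f(u(s))\|_\infty \le \ell(\|u(s)\|_\infty)\|u(s)\|_\infty$. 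The bound $\|u(s)\|_1 \le 2\|\phi\|_1$ for small $s$ (continuity of $u$ in $L^1$ at $0$) converts these into an integral inequality for $M(t) := \sup_{0 < s \le t} s^{n/2}\|u(s)\|_\infty$. Integrability of $\ell(\tau^{-n/2})$ on $(0,1)$, furnished by (I1) via (\ref{eq:altI}) and implied by (I2) through Remark~\ref{rem:I2I1}, then closes a bootstrap that first bounds $M$ on a small interval and subsequently drives it to $0$.

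With this decay in place, repeat the chain of estimates (\ref{eq:L1})--(\ref{eq:gron}) of Proposition~\ref{prop:cd} verbatim, with $\psi = \phi$ and $u$ playing the role of $\upperu(\cdot;\psi)$. For $\tau \in (0, T_\phi)$ small enough that $\|u(t)\|_\infty, \|\upperu(t;\phi)\|_\infty \le t^{-n/2}$ on $(0,\tau]$, the same computations produce the integral inequality (\ref{eq:gron}) for
\bs
y(t) := \|u(t) - \upperu(t;\phi)\|_1 + t^{n/2}\|u(t) - \upperu(t;\phi)\|_\infty.
\es
Continuity of $y$ on $[0,\tau]$ with $y(0) = 0$ follows from the decay estimate just established (for $u$) and from the conclusion of Theorem~\ref{thm:exist} (for $\upperu$). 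The singular Gronwall inequality of Proposition~\ref{prop:cd} then delivers $y(t) \le 2\|\phi-\phi\|_1\,\e^{q(t)} = 0$ on $[0,\tau]$, so $u \equiv \upperu(\cdot;\phi)$ on $[0,\tau]$.

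Finally, to propagate uniqueness from $[0,\tau]$ up to $[0,T)$, observe that on any compact $[\tau, T_0] \subset [\tau, T)$ both solutions are uniformly bounded in $L^\infty(\R^n)$ by the definition of classical $L^1$-solution. A standard Gronwall inequality in $L^1$ using local Lipschitz continuity of $f$, started from the identity $u(\tau) = \upperu(\tau;\phi)$, forces $u \equiv \upperu(\cdot;\phi)$ on $[\tau, T_0]$; letting $T_0 \nearrow T$ completes the argument.
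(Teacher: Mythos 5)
There is a genuine gap at the step you yourself identify as the principal technical obstacle: the claim that \emph{every} classical $L^1$-solution $u$ on $[0,T)$ satisfies $t^{n/2}\|u(t)\|_{\infty}\to 0$ as $t\to 0^+$. Your bootstrap requires an integral inequality for $M(t)=\sup_{0<s\le t}s^{n/2}\|u(s)\|_{\infty}$, but the definition of a classical $L^1$-solution furnishes only $u\in L^{\infty}_{\loc}\left((0,T),L^{\infty}(\R^n)\right)$, with no rate whatsoever as $s\to 0^+$; a priori $M(t)$ may be $+\infty$ for every $t>0$, in which case the inequality cannot even be written down, and there is no obvious continuity/connectedness device to start the bootstrap (truncating the Duhamel integral at $s=\eps$ controls the linear term via $\|u(\eps)\|_1$ but leaves $\ell(\|u(s)\|_{\infty})$ uncontrolled for $s$ near $\eps$ as $\eps\to 0$). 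A related, subtler issue: you ``start from (\ref{eq:VoC})'' for the arbitrary solution $u$, but the definition of classical $L^1$-solution does not assert the variation-of-constants formula down to $t=0$; that requires $f(u)\in L^1((0,t),L^1(\R^n))$ and a limiting argument, which again is not available without some a priori control near $t=0$. The same two defects infect the second half of your argument, where you run the estimates (\ref{eq:L1})--(\ref{eq:gron}) with $u$ in place of $\upperu(t;\psi)$: both the bound $\|u(t)\|_{\infty}\le t^{-n/2}$ on $(0,\tau]$ and the continuity of $y$ at $t=0$ with $y(0)=0$ rest on the unproved decay estimate.

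The paper's proof is designed precisely to avoid this obstacle, and you should compare your plan against it. Rather than estimating an arbitrary solution $v$ directly, it sets $B=v([0,\tau])$, a bounded subset of $L^1(\R^n)$ since $v\in C([0,\tau],L^1(\R^n))$, and invokes Theorem~\ref{thm:exist} to get a uniform existence time $T_B$ and the map $U_B(t)\psi=\upperu(t;\psi)$, which is $L^1$-continuous on $B$ by Proposition~\ref{prop:cd}. For $s>0$ the datum $v(s)$ is bounded, so classical $L^{\infty}$-uniqueness gives $v(t+s)=U_B(t)v(s)$; letting $s\to 0$ and using only the $L^1$-continuity of $v$ at $t=0$ and of $U_B(t)$ on $B$ yields $v(t)=U_B(t)\phi=\upperu(t;\phi)$ for small $t$. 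Thus the decay estimate is needed only for the \emph{constructed} solutions, where Theorem~\ref{thm:exist} supplies it via the ordering $2S(t)\phi^-\le\loweru\le\upperu\le 2S(t)\phi^+$ and Lemma~\ref{lem:smoothing}. Your final propagation step from $[0,\tau]$ to $[0,T)$ by $L^{\infty}$-theory is fine and matches the paper, but as it stands the core of your argument does not close.
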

\begin{proof}  Let $u (t):=\upperu (t;\phi)$ for $t\in [0,T_{\phi})$ and $T\le T_{\phi}$ be arbitrary. Suppose  there  exists a   classical $L^1$-solution 
$v$ on $[0,T)$  with $v(0)=\phi$.
 By classical $L^{\infty}$-theory, it is clear that  if there exists a $\tau \in (0,T)$ such that $u (t)=v(t)$ on $(0,\tau ]$, then
  $u (t)=v(t)$ on $[0,T)$, i.e. uniqueness for sufficiently small times implies uniqueness on $[0,T)$.

Fix $\tau \in (0,T)$ and  let $B=v\left([0,\tau ]\right)$. Since $v\in C\left([0,\tau ],L^{1}(\R^n  )\right)$,  $B$
 is a  bounded  subset of $L^1(\R^n  )$.  By Theorem~\ref{thm:exist} there exists a $T_B\in (0,T_\phi ]$ such that   $\upperu (t;\psi )$ is well-defined for all $\psi\in B$ and $t\in [0,T_B)$. Hence for all $t\in [0,T_B)$ we may define $U_B(t)\colon B\to L^{1}(\R^n  )$  by $U_B(t)\psi =\upperu (t;\psi )$  and deduce from
 Proposition~\ref{prop:cd}  that $U_B(t)$ is continuous (with respect to the induced $L^1$-norm). Again
 by classical $L^{\infty}$-uniqueness theory we have that $v(t+s)=U_B(t)v(s)$ for all $t\ge 0$ sufficiently small and $s>0$ sufficiently small.
 Letting $s\to 0$, and using the continuity of $v:[0,\tau ]\to L^{1}(\R^n  )$ and $U_B(t)\colon B\to L^{1}(\R^n  )$, we therefore obtain $v(t)=U_B(t)v(0)=U_B(t)\phi=u (t)$ for all sufficiently small $t> 0$, as required.
\end{proof}

By Theorem~\ref{thm:unique}  we may  now dispense with the overbar (underbar) notation and simply write $u$ for $\upperu$ (and $\loweru$) 
whenever $\phi\in L^1(\R^n  )$  and  $f$ satisfies {\bf  (M)} and  {\bf  (I2)}.


\subsection{Continuous Dependence  and Comparison}

As a consequence of Theorem~\ref{thm:exist}  and Theorem~\ref{thm:unique} we may now  deduce the existence  of a maximally continued solution and obtain the usual blow-up alternative on its  maximal interval of existence.

\begin{theorem}(Maximal Solution.) Let $\phi\in L^1(\R^n  )$ and suppose that  $f$ satisfies {\bf  (M)} and  {\bf  (I2)}. Let $u (t;\phi )$ denote the unique classical $L^1$-solution on $[0, T_\phi)$. Then there exists $T_{\rm max}(\phi)\ge T_{\phi}$ such that:
\bi
\item[(a)]  $ u(t;\phi )$ can be continued (in a unique way) to a classical $L^1$-solution on $[0, T_{\rm max}(\phi))$;
\item[(b)] if $T_{\rm max}(\phi)< \infty$ then $u (t;\phi )$ cannot be continued to a classical $L^1$-solution on $[0, \tau )$
for any $\tau  > T_{\rm max}(\phi)$;
\item[(c)]  if $T_{\rm max}(\phi)< \infty$ then $\|u (t;\phi )\|_{1}\to\infty$ as $t\to T_{\rm max}(\phi)$.
\ei
\label{thm:maxsoln}
\end{theorem}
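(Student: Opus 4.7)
The plan is a standard maximal continuation argument for semilinear parabolic evolutions, exploiting the \emph{uniform} local existence time provided by Theorem~\ref{thm:exist} together with the uniqueness of Theorem~\ref{thm:unique}. First, define
\[
T_{\rm max}(\phi) = \sup\{T > 0 : \text{(\ref{nhe}) admits a classical } L^1\text{-solution on } [0,T)\}.
\]
By Theorem~\ref{thm:unique}, any two classical $L^1$-solutions on intervals $[0,T_1)$ and $[0,T_2)$ emanating from $\phi$ coincide on $[0, \min(T_1, T_2))$, so all such solutions may be unambiguously glued into a single classical $L^1$-solution $u(\cdot;\phi)$ on $[0, T_{\rm max}(\phi))$. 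This proves (a), and (b) is immediate from the definition of the supremum.

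For (c) I would argue by contradiction. Suppose $T_{\rm max}(\phi) < \infty$ while $\liminf_{t \uparrow T_{\rm max}(\phi)} \|u(t;\phi)\|_1 < \infty$. Then there exist a sequence $t_n \uparrow T_{\rm max}(\phi)$ and $M > 0$ with $\|u(t_n;\phi)\|_1 \le M$ for all $n$, so the set $B = \{u(t_n;\phi) : n \in \N\}$ is bounded in $L^1(\R^n)$. Applying Theorem~\ref{thm:exist} to $B$ yields a common existence time $T_B > 0$ such that, for each $n$, there is a classical $L^1$-solution $w_n$ on $[0, T_B)$ with $w_n(0) = u(t_n;\phi)$. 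Pick $n$ large enough that $t_n + T_B > T_{\rm max}(\phi)$ and define
\[
\tilde u(t) = \begin{cases} u(t;\phi), & t \in [0, t_n], \\ w_n(t - t_n), & t \in [t_n, t_n + T_B). \end{cases}
\]

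The main obstacle is verifying that $\tilde u$ is a genuine classical $L^1$-solution on $[0, t_n + T_B)$, as this will contradict the maximality of $T_{\rm max}(\phi)$ and close the argument. Continuity of $\tilde u$ as a map into $L^1(\R^n)$ at the splice time $t_n$ is immediate from the $L^1$-continuity of each piece, which match at $t_n$. The classical PDE regularity at $t_n > 0$ follows because both $u(\cdot;\phi)$ and $w_n(\cdot - t_n)$ are bounded and smooth on a neighbourhood of $t_n$ (by the $L^\infty_{\loc}$ property in Definition~\ref{def:wellposed}(i) together with standard parabolic regularity), and they must coincide on a one-sided neighbourhood $[t_n, t_n + \delta)$ by classical $L^\infty$-uniqueness, since both solve (\ref{nhe}) in the classical sense with the same bounded datum $u(t_n;\phi)$ at $t = t_n$. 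Hence $\tilde u$ extends $u(\cdot;\phi)$ as a classical $L^1$-solution past $T_{\rm max}(\phi)$, contradicting maximality. Therefore $\liminf_{t \uparrow T_{\rm max}(\phi)} \|u(t;\phi)\|_1 = \infty$, which is equivalent to $\|u(t;\phi)\|_1 \to \infty$ as $t \to T_{\rm max}(\phi)$, establishing (c).
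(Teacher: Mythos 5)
Your argument is correct and is precisely the ``standard way'' the paper invokes: its proof of Theorem~\ref{thm:maxsoln} is a one-line appeal to the uniform existence time of Theorem~\ref{thm:exist} on bounded sets and the uniqueness of Theorem~\ref{thm:unique} (citing Proposition~16.1 of Quittner--Souplet), which is exactly the continuation-by-gluing argument you carry out in detail. The only minor point is at the splice time: agreement of $w_n(\cdot-t_n)$ with $u(\cdot;\phi)$ is obtained most cleanly by applying Theorem~\ref{thm:unique} with initial datum $u(t_n;\phi)$ (since $w_n$ is a priori only in $L^{\infty}_{\loc}((0,T_B),L^{\infty})$ and need not be bounded up to $s=0$, so classical $L^\infty$-uniqueness requires the same small-$s$ limiting device used in the paper's proof of Theorem~\ref{thm:unique}).
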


\begin{proof}
Follows in the standard way (see \cite[Proposition~16.1]{QS19} for example) by  the uniform existence result of Theorem~\ref{thm:exist} and
uniqueness of  Theorem~\ref{thm:unique}.
\end{proof}

Part (a) of the following theorem  is an immediate consequence of  Proposition~\ref{prop:cd} and Theorem~\ref{thm:unique}.  We choose to state it explicitly in order to summarize our results more clearly but especially because  Proposition~\ref{prop:cd} refers only to  solutions obtained via the monotone iteration scheme, as opposed to {\em any}  classical $L^1$-solution.

\begin{theorem} Let $\phi ,\psi\in L^1(\R^n  )$ and suppose that  $f$ satisfies {\bf  (M)} and  {\bf  (I2)}. Let   $u (t;\phi)$ and $u(t;\psi)$ be  as in Theorem~\ref{thm:maxsoln}.
\bi
\item[(a)] (Continuous Dependence.) There exists $\tau =\tau (\phi,\psi) >0$ and a continuous, non-negative function $q(t)$ satisfying $q(t)\to 0$ as $t\to 0$, such that for all $t\in (0,\tau ]$,
\bs
\left\|u (t;\phi)-u (t;\psi)\right\|_1+t^{n/2}\left\|u (t;\phi)-u (t;\psi)\right\|_{\infty} \le 2\left\|\phi-\psi\right\|_1 \e^{q(t)}.
\es
\item[(b)] (Comparison.) If  $\phi\le \psi$ then $u (t;\phi)\le u (t;\psi)$ on $[0,T)$, for all
$T\le \min\{T_{\rm{max}}(\phi),T_{\rm{max}}(\psi)\}$. In particular, if $\phi\ge 0$ then  $u(t;\phi)\ge 0$ on $[0,T_{\rm{max}}(\phi))$.
\ei
\label{thm:comp}
\end{theorem}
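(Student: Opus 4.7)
Part (a) is essentially free. Under hypotheses \textbf{(M)} and \textbf{(I2)}, Theorem~\ref{thm:unique} identifies the unique classical $L^1$-solution with $\upperu$, so the claimed estimate is just Proposition~\ref{prop:cd} rewritten in the $u$-notation, valid on the small interval $(0,\tau]$ with $\tau = \tau(\phi,\psi)$ produced there.

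For part (b) my strategy is to run the monotone iteration of Theorem~\ref{thm:exist} for both initial data from a \emph{common} super-solution and exploit the monotonicity of $\F$ in both of its arguments. First I would let $B = \{\phi,\psi\}$ and invoke Theorem~\ref{thm:exist} to obtain a uniform existence time $T_B > 0$ on which both $u(\cdot;\phi)$ and $u(\cdot;\psi)$ are defined. Since $\phi \le \psi$ forces $\phi^+ \le \psi^+$, the function $\hat w(t) := 2 S(t)\psi^+$ dominates $2 S(t)\phi^+$ pointwise, and repeating the super-solution calculation of Theorem~\ref{thm:exist} with $K = \|\psi\|_1$ shows $\F(\hat w;\psi) \le \hat w$ on $[0, T_B]$; since additionally $S(t)\phi \le S(t)\psi$, one obtains $\F(\hat w;\phi) \le \F(\hat w;\psi) \le \hat w$ on the same interval, so $\hat w$ is an integral super-solution for both Cauchy problems. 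A symmetric observation makes $v(t) := 2 S(t)\phi^-$ a common integral sub-solution with $v \le \hat w$.

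Next I would iterate: set $w_0^\phi = w_0^\psi = \hat w$ and $w_{k+1}^\phi = \F(w_k^\phi;\phi)$, $w_{k+1}^\psi = \F(w_k^\psi;\psi)$. A direct induction combining $\phi \le \psi$ on the linear term with the monotonicity of $f$ and the positivity of $S(\cdot)$ on the nonlinear term gives $w_k^\phi \le w_k^\psi$ for every $k$. Passing to the pointwise limits (which exist by the same dominated-convergence argument as in Theorem~\ref{thm:exist}, bounded below by $v$), the limits are classical $L^1$-solutions on $[0,T_B)$ and hence, by Theorem~\ref{thm:unique}, coincide with $u(\cdot;\phi)$ and $u(\cdot;\psi)$, yielding $u(t;\phi) \le u(t;\psi)$ on $[0,T_B)$.

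To extend the inequality to any $T \le \min\{T_{\max}(\phi), T_{\max}(\psi)\}$ I would apply a standard continuation argument: the subset of $[0,T)$ on which $u(s;\phi) \le u(s;\psi)$ holds for every $s \in [0,t]$ is non-empty (it contains $[0,T_B)$), closed by $L^1$-continuity, and open by restarting the local comparison at any interior time with the now-ordered classical values as new initial data. The remaining assertion $u(t;\phi) \ge 0$ whenever $\phi \ge 0$ follows by comparing $\phi$ with $\psi \equiv 0$, for which $u \equiv 0$ is the unique classical $L^1$-solution. The main delicacy throughout is verifying that the iteration initialised from $\hat w = 2 S(t)\psi^+$ (rather than the choice $2 S(t)\phi^+$ used in Theorem~\ref{thm:exist}) still converges to $u(\cdot;\phi)$; this is precisely where uniqueness under \textbf{(I2)} is indispensable, forcing every classical $L^1$-solution obtained as a monotone limit of such iterates to agree with $u(\cdot;\phi)$ regardless of the initialising super-solution.
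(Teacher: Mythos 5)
Your proof is correct and follows essentially the same route as the paper: part (a) is indeed immediate from Proposition~\ref{prop:cd} once Theorem~\ref{thm:unique} identifies $\upperu$ with the unique classical $L^1$-solution, and part (b) rests on the order-preservation of $\F$ in both arguments applied to the monotone iteration, followed by classical $L^\infty$-comparison for bounded solutions to extend the ordering up to $\min\{T_{\rm max}(\phi),T_{\rm max}(\psi)\}$. The only real difference is that you initialise both iterations from the common supersolution $2S(t)\psi^+$, which obliges you to invoke Theorem~\ref{thm:unique} once more to re-identify the limit of the $\phi$-iteration, whereas the paper initialises each iteration from its own $2S(t)\phi^+\le 2S(t)\psi^+$ and simply propagates that ordering through the induction, so that both limits are the constructed solutions $\upperu(t;\phi)$ and $\upperu(t;\psi)$ by definition.
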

\begin{proof}
For part (b) we recall from Theorem~\ref{thm:exist} and Theorem~\ref{thm:unique} that there exists a  
 $T_\phi\le  T_{\rm{max}}(\phi)$ such that  the  classical $L^1$-solution $u(t;\phi)$ is obtained 
 as the pointwise limit of the iteration scheme
\bs
w_{k+1}(t;\phi )={\mathscr F}(w_k;\phi),\qquad w_0(t;\phi )=2 S(t)\phi^+
\es
for $t\in [0,T_\phi )$.

Similarly we obtain the   classical $L^1$-solution $u (t;\psi)$ on $[0,T_\psi)$ via
\bs
w_{k+1}(t;\psi )={\mathscr F}(w_k;\psi),\qquad w_0(t;\psi )=2 S(t)\psi^+.
\es
Let $T_0=\min\{T_\phi,T_\psi\}$. Clearly
\bs
w_0(t;\phi )=2 S(t)\phi^+ \le 2 S(t)\psi^+ =w_0(t;\psi ),
 \es
and since the  operator ${\mathscr F}$ is order-preserving in both its arguments (due to  monotonicity of $f$ and $S(t)$), it follows easily by induction that $w_k(t;\phi )\le w_k(t;\psi )$  for all $k$. Letting $k\to\infty$ we therefore have $u(t;\phi)\le u(t;\psi)$ on $[0,T_0)$.  The comparison result on  $[0,T)$ for any $T\le\max\{T_{\rm {max}}(\phi),T_{\rm {max}}(\psi )\}$ now follows by standard $L^\infty$-comparison results for classical solutions since $f$ is locally Lipschitz continuous.

The final part of (b) follows by comparison and the fact that $u(t;0)=0$ for all $t\ge 0$ by uniqueness.
\end{proof}


\subsection{Global Solutions}\label{sec:global}

Next we  establish some sufficient conditions for the global continuation of solutions with small intial data.
 The following integral condition plays the key  r\^ole:
\begin{flalign*}
\quad\text{\bf  (I3)} \quad \int_{0}^1s^{-p_F}\ell (s)\, \dee  s <\infty .&&
\end{flalign*}

\begin{theorem}
Suppose $\phi\in L^1(\R^n  )\cap L^\infty(\R^n  )$ and {\bf  (M)} holds.  Let  $u_c(t;\phi)$ denote  the unique classical $L^\infty$-solution
 of (\ref{nhe}) with maximal (in  $L^\infty$) interval of existence  $[0,T_{\rm max}(\phi))$. If   {\bf  (I3)} holds, then for any $A>1$
  there exists  $\delta = \delta (A)> 0$   such that for all  $\phi$ satisfying
$\|\phi\|_1+\|\phi\|_\infty \le \delta $ we have $T_{\rm max}(\phi)=\infty$ and
\be
AS(t)\phi^{-}\le u_c(t;\phi)\le AS(t)\phi^{+}\label{eq:globalorder}
\ee
for all $t\ge 0$.    Consequently $\|u_c(t;\phi)\|_\infty\le At^{-n/2}\|\phi\|_1$ for all $t>0$.
\label{thm:global}
\end{theorem}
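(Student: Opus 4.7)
The plan is to adapt the monotone iteration scheme of Theorem~\ref{thm:exist}, replacing the factor $2$ by the given $A>1$ and extending the sub/super-solution bounds from a small time interval to all of $[0,\infty)$. I would take as supersolution and subsolution candidates
\[
w(t)=AS(t)\phi^+\ge 0,\qquad v(t)=AS(t)\phi^-\le 0,
\]
and show that, provided $\|\phi\|_1+\|\phi\|_\infty\le\delta=\delta(A)$ is sufficiently small, ${\mathscr F}(w;\phi)\le w$ and ${\mathscr F}(v;\phi)\ge v$ for every $t\ge 0$.

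The heart of the argument is a global-in-time estimate of $\int_0^t\ell(\|w(s)\|_\infty)\,\dee s$. I would bound $\|AS(s)\phi^+\|_\infty$ by the minimum of $A\|\phi^+\|_\infty\le A\delta$ and $A\|\phi^+\|_1 s^{-n/2}\le A\delta s^{-n/2}$, and split the integral at the natural scale $t_0=(A\delta)^{2/n}$ where these two bounds balance. The contribution from $(0,t_0)$ is at most $(A\delta)^{2/n}\ell(A\delta)$, while on $(t_0,\infty)$ the substitution $\tau=A\delta s^{-n/2}$ converts the tail into $\tfrac{2}{n}(A\delta)^{2/n}\int_0^1\tau^{-p_F}\ell(\tau)\,\dee\tau$, finite by {\bf (I3)}. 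Since $\ell(A\delta)\to 0$ as $\delta\to 0$, both contributions can be forced below $1-1/A$ by choosing $\delta$ small enough, uniformly in $t\ge 0$. Combining this with $S(t)\phi\le S(t)\phi^+=w(t)/A$ and the pointwise inequality $f(w)\le\ell(\|w\|_\infty)w$ from (\ref{eq:fellplus}) yields ${\mathscr F}(w;\phi)\le w$ on $[0,\infty)$; the subsolution inequality ${\mathscr F}(v;\phi)\ge v$ follows symmetrically using (\ref{eq:fellminus}) together with the analogous bound $\|v(s)\|_\infty\le A\delta\min\{1,s^{-n/2}\}$.

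With an ordered pair $v\le 0\le w$ now valid on $[0,\infty)$, I would apply the monotone iteration of Theorem~\ref{thm:exist} on each slab $Q_N$ to obtain an integral solution sandwiched between $v$ and $w$; the same dominated-convergence and parabolic-regularity arguments as in that proof upgrade it to a classical solution on $(0,N)$, and local uniqueness of classical $L^\infty$-solutions (from the local Lipschitz continuity of $f$) forces agreement with $u_c$ on each $[0,N)$. Hence $T_{\rm max}(\phi)=\infty$ and the enclosure (\ref{eq:globalorder}) holds, after which $\|u_c(t;\phi)\|_\infty\le At^{-n/2}\|\phi\|_1$ follows from the smoothing estimate (\ref{eq:smoothing}) applied to $w$ and $-v$, noting $\|\phi^\pm\|_1\le\|\phi\|_1$. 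The principal obstacle is the uniform-in-$t$ character of the integral estimate: because only {\bf (I3)} (near-zero integrability) is available rather than the existence condition {\bf (I1)} (at infinity), it is crucial that the change of variables reshape the long-time tail of $\int\ell(\|w(s)\|_\infty)\,\dee s$ into precisely the near-zero quantity $\int_0^1\tau^{-p_F}\ell(\tau)\,\dee\tau$ that {\bf (I3)} controls.
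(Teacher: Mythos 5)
Your proposal is correct and follows essentially the same route as the paper: the same sub/supersolution pair $AS(t)\phi^{\pm}$, the same split of $\int_0^t \ell(\|w(s)\|_\infty)\,\dee s$ into a short-time piece where $\|w\|_\infty$ is bounded plus a tail converted by the substitution $z=A\delta s^{-n/2}$ into the near-zero integral controlled by \textbf{(I3)}, and the same monotone iteration together with classical $L^\infty$-uniqueness to identify the limit with $u_c$. The only (harmless) difference is that you cut the integral at the $\delta$-dependent scale $(A\delta)^{2/n}$ using $\|w\|_\infty\le A\delta$ there, whereas the paper cuts at a fixed $\tau$ chosen so that $1-A+A\ell(A)\tau<0$, using the cruder bound $\|w\|_\infty\le A$ coming from $\delta\le 1$.
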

\begin{proof}  First, we will show that for fixed $A>1$ and suitably small $\delta >0$, $v\coloneqq AS(t)\phi^-$ and $w\coloneqq AS(t)\phi^+$ are    an integral sub-supersolution pair for (\ref{nhe}) for all $t\ge 0$. Similarly to Theorem~\ref{thm:exist} we will then be able to deduce the existence of a pair of
classical $L^\infty$-solutions $\loweru$ and  $ \upperu$ satisfying
\be
v\le\loweru\le \upperu\le w.
\label{eq:infsubsup}
\ee
By   uniqueness of classical $L^\infty$-solutions (see e.g.,  \cite[Section 51, Appendix E]{QS19}) we may then conclude that $\loweru = \upperu=u_c$ and hence $v\le u_c(t;\phi)\le w$, yielding (\ref{eq:globalorder}). The $L^\infty$ bound for $u_c$ then follows by   $L^1$-$L^\infty$ smoothing via Lemma~\ref{lem:smoothing}. It is therefore sufficient
to prove the existence of solutions $\loweru$ and  $\upperu$ satisfying (\ref{eq:infsubsup}).


Let $\delta \le 1$ and choose  $\tau >0$ such that $1-A+A\ell (A)\tau <0$. By (\ref{eq:smoothing}) with $q=r=\infty$ we have $\|S(t)\phi\|_\infty\le \|\phi\|_\infty\le \delta \le 1$ for all $t\ge 0$. In particular, for all $t\in [0,\tau ]$ we have
\bs
{\mathscr F}(w;\phi )-w & = & S(t)\phi+\int_0^t S(t-s) f(w(s))\,\dee s-w\\
&\le &S(t)\phi^{+}+\int_0^t S(t-s) \left[\ell (w(s))w(s)\right]\,\dee s-w\\
&=& (1-A)S(t)\phi^{+}+\int_0^t S(t-s)\left[\ell \left(AS(s)\phi^{+}\right)AS(s)\phi^{+}\right]\,\dee s\\
&\le & (1-A)S(t)\phi^{+}+\int_0^t S(t-s)\left[\ell \left(\|AS(s)\phi^{+}\|_{\infty}\right)AS(s)\phi^{+}\right]\,\dee s\\
&\le & (1-A)S(t)\phi^{+}+\int_0^t S(t-s)\left[\ell (A)AS(s)\phi^{+}\right]\,\dee s\\
&=& (1-A)S(t)\phi^{+}+A\ell (A)\int_0^t S(t)\phi^{+}\,\dee s\\
&\le &\left(1-A+A\ell (A)\tau \right)S(t)\phi^{+}\le 0.
\es
For  $t> \tau $ we proceed as above, again making  use of   $L^1$-$L^\infty$ smoothing:
\bs
&&{\mathscr F}(w;\phi )-w= S(t)\phi+\int_0^\tau  S(t-s) f(w(s))\,\dee s+\int_\tau ^t S(t-s) f(w(s))\,\dee s-w\\
&\le&\left(1-A+A\ell (A)\tau\right)S(t)\phi^{+}+\int_\tau ^t S(t-s) \left[\ell (w(s))w(s)\right]\,\dee s\\
&\le& \left(1-A+A\ell (A)\tau\right)S(t)\phi^{+}+\int_\tau ^t S(t-s)\left[\ell \left(\|AS(s)\phi^{+}\|_{\infty}\right)AS(s)\phi^{+}\right]\,\dee s\\
&\le& \left(1-A+A\ell (A)\tau\right)S(t)\phi^{+}+\int_\tau ^t S(t-s)\left[\ell (A\delta s^{-n/2})AS(s)\phi^{+}\right]\,\dee s\\
&=& \left(1-A+A\ell (A)\tau\right)S(t)\phi^{+}+AS(t)\phi^{+}\int_\tau ^t \ell (A\delta  s^{-n/2})\,\dee s\\
&=& \left(1-A+A\ell (A)\tau+({2}/{n})A^{p_F}\delta^{2 /n}\int_{A\delta t^{-n/2}}^{A\delta \tau ^{-n/2}} z^{-p_F}\ell (z)\,\dee z\right)S(t)\phi^{+}\\
&\le& \left(1-A+A\ell (A)\tau +({2}/{n})A^{p_F}\delta^{2 /n}\int_{0}^{A\delta \tau ^{-n/2}} z^{-p_F}\ell (z)\,\dee z\right)S(t)\phi^{+}\\
&\le & 0
\es
for $\delta $ sufficiently small (and independently of $t$), by  {\bf  (I3)}.

In exactly the same way (and similar to that in the proof of Theorem~\ref{thm:exist} - see the calculations surrounding (\ref{eq:red2})) one may verify that
$v$ is an integral subsolution.
 We omit the repetitive details. Likewise one can again construct monotonic sequences $v_k$ and $w_k$ as in (\ref{eq:itupper}-\ref{eq:itlower}), converging pointwise to  limiting functions $\loweru$ and $\upperu$ respectively, with $v\le\loweru\le \upperu\le w$. Via the properties shared by $v$ and $w$ and the dominated convergence theorem, $\loweru$ and $\upperu$ both satisfy the variation of constants formula (\ref{eq:VoC}) almost everywhere in $Q_T$, for any $T>0$. Furthermore $\loweru$ and $\upperu$ are both essentially bounded in $Q_T$ and consequently they are classical solutions of (\ref{nhe}). This completes the proof.
\end{proof}

For initial data which are not necessarily bounded  we  also require $f$ to satisfy the integral condition  {\bf  (I2)}.

\begin{theorem}
Suppose $\phi\in L^1(\R^n)$ and {\bf  (M)} and  {\bf  (I2)}  hold.  Let  $u(t;\phi)$ denote  the unique classical $L^1$-solution
 of (\ref{nhe}) guaranteed by Theorem~\ref{thm:maxsoln}, with maximal  interval of existence $[0,T_{\rm max}(\phi))$.  If   {\bf  (I3)} holds, then for any $A>1$
 there exists  $\delta = \delta (A)> 0$   such that for all  $\phi$ satisfying
$\|\phi\|_1 \le \delta $ we have $T_{\rm max}(\phi)=\infty$ and
\bs
AS(t)\phi^{-}\le u(t;\phi)\le AS(t)\phi^{+}
\es
for all $t\ge 0$.    Consequently $\|u(t;\phi)\|_\infty\le At^{-n/2}\|\phi\|_1$ for all $t>0$.
\label{thm:globalL1}
\end{theorem}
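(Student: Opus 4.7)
The plan is to mirror the proof of Theorem~\ref{thm:global}, replacing the unavailable $L^\infty$ bound on $\phi$ with the $L^1$--$L^\infty$ smoothing estimate \eqref{eq:smoothing}, and to exploit the additional cancellation provided by \textbf{(I3)} at large times. Set $w(t)\coloneqq AS(t)\phi^{+}$ and $v(t)\coloneqq AS(t)\phi^{-}$. I will show that, for $\delta=\delta(A)$ sufficiently small, these form an integral sub-supersolution pair for \eqref{nhe} on $[0,\infty)$. The monotone iteration scheme of Theorem~\ref{thm:exist}, run on each slab $Q_T$, then produces classical solutions $\loweru\le\upperu$ sandwiched between $v$ and $w$ globally in time. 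Uniqueness (Theorem~\ref{thm:unique}, available thanks to \textbf{(I2)}) forces $\loweru=\upperu$, which must coincide with $u(t;\phi)$ on $[0,T_{\rm max}(\phi))$, and the induced bound $\|u(t;\phi)\|_1\le A\|\phi\|_1$ rules out finite-time blow-up via Theorem~\ref{thm:maxsoln}(c).

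For the supersolution inequality, using \textbf{(M)}, \eqref{eq:fellplus}, monotonicity of $\ell$, and $\|AS(s)\phi^{+}\|_\infty\le A\delta\, s^{-n/2}$, the same computation as in Theorem~\ref{thm:global} yields
\[
\mathscr{F}(w;\phi)-w \;\le\; \Bigl[\,1-A+A\int_0^t \ell\bigl(A\delta\, s^{-n/2}\bigr)\,\dee s\,\Bigr]\, S(t)\phi^{+}.
\]
The substitution $s=(A\delta)^{2/n}\tau$ converts the inner integral into
\[
\int_0^t \ell\bigl(A\delta\, s^{-n/2}\bigr)\,\dee s \;=\; (A\delta)^{2/n}\int_0^{t(A\delta)^{-2/n}} \ell(\tau^{-n/2})\,\dee\tau \;\le\; (A\delta)^{2/n}\int_0^{\infty}\ell(\tau^{-n/2})\,\dee\tau,
\]
a bound which is \emph{uniform in} $t\ge 0$.

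The crucial step, and the only place where \textbf{(I3)} enters, is verifying the finiteness of $\int_0^\infty \ell(\tau^{-n/2})\,\dee\tau$. On $[0,1]$ this is precisely \eqref{eq:altI}, which is equivalent to \textbf{(I1)} and hence implied by \textbf{(I2)} via Remark~\ref{rem:I2I1}. On $[1,\infty)$ the change of variable $z=\tau^{-n/2}$ converts the integral into $(2/n)\int_0^1 z^{-p_F}\ell(z)\,\dee z$, which is finite exactly by \textbf{(I3)}. Choosing $\delta(A)$ small enough that $(A\delta)^{2/n}\int_0^\infty\ell(\tau^{-n/2})\,\dee\tau \le (A-1)/A$ then gives $\mathscr{F}(w;\phi)\le w$ on $[0,\infty)$. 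The mirror computation using \eqref{eq:fellminus} in place of \eqref{eq:fellplus}, exactly as around \eqref{eq:red2}, yields $\mathscr{F}(v;\phi)\ge v$ on $[0,\infty)$.

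With $v$ and $w$ now global sub- and supersolutions, the iteration scheme \eqref{eq:itupper}--\eqref{eq:itlower} and the dominated-convergence / parabolic-regularity arguments of Theorem~\ref{thm:exist} deliver classical solutions $\loweru,\upperu$ with $v\le\loweru\le\upperu\le w$ on every $Q_T$; by Theorem~\ref{thm:unique} they coincide with each other and, on $[0,T_{\rm max}(\phi))$, with $u(t;\phi)$, so $u(t;\phi)$ extends classically to all $t\ge 0$ and $T_{\rm max}(\phi)=\infty$. The pointwise sandwich $AS(t)\phi^{-}\le u(t;\phi)\le AS(t)\phi^{+}$ together with \eqref{eq:smoothing} gives $\|u(t;\phi)\|_\infty\le At^{-n/2}\|\phi\|_1$ at once. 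The main obstacle is the uniform-in-$t$ control of $\int_0^t \ell(A\delta\, s^{-n/2})\,\dee s$, for which the interplay of \textbf{(I1)} near $s=0$ and \textbf{(I3)} near $s=\infty$ is essential; everything else is a routine rerun of the earlier machinery.
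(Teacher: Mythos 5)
Your proposal is correct and follows essentially the same route as the paper: the paper likewise takes $v=AS(t)\phi^{-}$, $w=AS(t)\phi^{+}$ as a global sub-supersolution pair by setting $\tau=0$ in the calculation from Theorem~\ref{thm:global}, arriving at the same smallness condition $\bigl(1-A+(2/n)A^{p_F}\delta^{2/n}\int_0^\infty z^{-p_F}\ell(z)\,\dee z\bigr)\le 0$, with the integral finite near $0$ by \textbf{(I3)} and near $\infty$ by \textbf{(I1)} (implied by \textbf{(I2)}), before concluding via monotone iteration and Theorem~\ref{thm:unique}.
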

\begin{proof}
It is easily verified that $v= AS(t)\phi^-$ and $w= AS(t)\phi^+$ are a pair of global integral  sub-supersolutions by simply taking $\tau =0$
 in the calculations in the proof of Theorem~\ref{thm:global}; for example,
 \bs
{\mathscr F}(w;\phi )-w \le  \left(1-A +({2}/{n})A^{p_F}\delta^{2 /n}\int_{0}^{\infty} z^{-p_F}\ell (z)\,\dee z\right)S(t)\phi^{+}
\le  0
\es
for $\delta $ sufficiently small  by  {\bf  (I2)} (see Remark~\ref{rem:I2I1}) and  {\bf  (I3)}. Similarly for $v$.

 The usual monotone iteration procedure then yields a pair of global classical $L^1$-solutions $\loweru$ and $\overline{u}$ satisfying
 $v\le\loweru\le \overline{u}\le w$ for all $t\ge 0$. By the  uniqueness result of Theorem~\ref{thm:unique} it follows that
  $\loweru =\overline{u}= u(t;\phi)$ on $[0,T_{\rm max}(\phi))$ and so $T_{\rm max}(\phi)=\infty$. The $L^\infty$ bound for $u$  follows
   once more by the ordering   $AS(t)\phi^-\le u\le  AS(t)\phi^+$ and  $L^1$-$L^\infty$ smoothing of the heat semigroup.
\end{proof}

\begin{example}
 Let $1<p<p_F<q$ and $f_+:[0,\infty )\to [0,\infty )$  be the non-convex, locally Lipschitz function
 \bs
 f_+(u)=\min\{u^{p},u^{q}\},
  \es
so that $f_+(u)=u^q$ for $0\le u\le 1$ and $f_+(u)= u^p$ for $u>1$.  Let  $f:\R\to\R$  be the odd extension of $f_+$. Then $f$ satisfies the hypotheses of both Theorem~\ref{thm:maxsoln} and Theorem~\ref{thm:globalL1}, with $L(s)=ps^{p-1}$ for $s$ large and $\ell (s)= s^{q-1}$ for $s$ small. This provides an example of a source term $f$ for which (\ref{nhe}) is both well-posed in $L^1(\R^n )$ {\em and}  possesses non-trivial global solutions, in contrast to the homogeneous power law case $f(u)=|u|^{p-1}u$, $p>1$.
\label{eg:minpq}
\end{example}




\section{Special Cases}

\subsection{Convex Source Terms}\label{sec:convex}

We now show that the `gap' between the integral condition {\bf  (I1)} for existence and the one  for uniqueness,  {\bf  (I2)},  vanishes
  when the source term $f$ is  odd and is convex on  $(0,\infty )$ .
\begin{lemma}
Assume {\bf  (M)} and {\bf  (I1)}  hold. If $f$ is  odd and is convex on  $(0,\infty )$, then $f$ satisfies  {\bf  (I2)}.
\label{lem:convexI1I2}
\end{lemma}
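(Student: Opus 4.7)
The plan is to exploit convexity on $(0,\infty)$ together with oddness to derive a pointwise bound of the form $L(s)\le 2\ell(2s)$; once this is in hand, a straightforward change of variables reduces the integral in {\bf (I2)} to a tail of the integral in {\bf (I1)}.

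First I would observe that convexity of $f$ on $[0,\infty)$ with $f(0)=0$ forces the chord-from-the-origin $t\mapsto f(t)/t$ to be non-decreasing on $(0,\infty)$; combined with oddness, which gives $f(t)/t=f(|t|)/|t|$ for $t<0$, this yields the clean identity $\ell(s)=f(s)/s$ for all $s>0$. This reduces {\bf (I1)} to the statement $\int_1^\infty s^{-p_F-1}f(s)\,\dee s<\infty$.

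Next I would bound $L(s)$ by analysing the three sign-patterns for $(u,v)$ in the sup defining $L$. When $0\le v<u\le s$, convexity on $[0,s]$ bounds the secant slope by the left derivative $f'_-(s)$; the elementary inequality $f(2s)-f(s)\ge s\,f'_-(s)$ then gives $f'_-(s)\le f(2s)/s=2\ell(2s)$. The case $-s\le u<v\le 0$ reduces to the previous one by oddness. In the mixed-sign case $u>0>v$ with $|u|,|v|\le s$, oddness rewrites $(f(u)-f(v))/(u-v)$ as $(f(u)+f(|v|))/(u+|v|)$; writing $a=u$, $b=|v|$, the chord-monotonicity from the origin gives $f(a)\le \tfrac{a}{a+b}f(a+b)$ and $f(b)\le \tfrac{b}{a+b}f(a+b)$, hence $(f(a)+f(b))/(a+b)\le f(a+b)/(a+b)=\ell(a+b)\le\ell(2s)$. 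Taking the supremum over all three cases yields the pointwise bound
\[
L(s)\le 2\,\ell(2s),\qquad s>0.
\]

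Finally, substituting this bound into {\bf (I2)} and changing variables $u=2s$ gives
\[
\int_1^\infty s^{-p_F}L(s)\,\dee s \;\le\; 2\int_1^\infty s^{-p_F}\ell(2s)\,\dee s \;=\; 2^{p_F}\int_2^\infty u^{-p_F}\ell(u)\,\dee u,
\]
which is finite by {\bf (I1)}. I expect the only step requiring genuine care to be the mixed-sign analysis for $L$; the essential insight is that $(f(u)+f(|v|))/(u+|v|)$ is itself a chord from the origin, so convexity makes it majorised by $\ell$ at a comparable argument. Everything else is bookkeeping and a single rescaling.
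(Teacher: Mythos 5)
Your proof is correct, but it takes a genuinely different route from the paper's. The paper identifies $L(s)=f^\prime(s)$ a.e.\ (using that the locally Lipschitz convex function is differentiable a.e.) and then integrates by parts, writing $\int_1^\infty s^{-p_F}f^\prime(s)\,\dee s=\bigl[s^{-p_F}f(s)\bigr]_1^\infty+p_F\int_1^\infty s^{-p_F}\ell(s)\,\dee s$; the second term is finite by {\bf (I1)}, and the boundary term is controlled by the auxiliary estimate $\int_s^{2s}t^{-p_F}\ell(t)\,\dee t\ge C_n s^{-p_F}f(s)$, which forces $s^{-p_F}f(s)\to 0$. You instead prove the pointwise bound $L(s)\le 2\ell(2s)$ directly by a three-case analysis of the difference quotient (same-sign pairs via the secant-slope/left-derivative comparison and $f^\prime_-(s)\le f(2s)/s$; mixed-sign pairs via the mediant $(f(a)+f(b))/(a+b)\le f(a+b)/(a+b)$), and then conclude by a single rescaling $u=2s$. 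Your mixed-sign argument is exactly the observation that also underlies the paper's identity $L(s)=f^\prime(s)$, but you never need that identity, the a.e.-differentiability discussion, the integration by parts, or the decay of the boundary term; the cost is the slightly lossier constant $2^{p_F}$ and the doubling of the argument of $\ell$, neither of which matters for an integrability statement. Both arguments are sound; the paper's yields the sharper structural fact that $\int_1^\infty s^{-p_F}L(s)\,\dee s$ and $\int_1^\infty s^{-p_F}\ell(s)\,\dee s$ are comparable up to the explicit boundary term, while yours is more elementary and self-contained. (One cosmetic point: the hypothesis is convexity on $(0,\infty)$, not $[0,\infty)$, but since $f$ is continuous with $f(0)=0$ the convexity extends to the closed half-line, so your use of chords from the origin is justified.)
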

\begin{proof} 
Since $f$ is locally Lipschitz it is differentiable a.e., with derivative $f^\prime$ at all such points.
 The oddness of $f$ together with its convexity on $(0,\infty )$ then imply that  $\ell  (s)=f(s)/s$ and  $L(s)=f^\prime (s)$ a.e.
Hence,
\ba
\int_1^{\infty}s^{-p_F}L (s)\,\dee s &=& \int_{1}^\infty  s^{-p_F} f^\prime(s)\, \dee  s \nonumber\\
 &=& \left[s^{-p_F} f(s)\right]_{1}^{\infty} +p_F \int_{1}^\infty  s^{-p_F} \ell  (s)\, \dee  s.\label{eq:I2rhs}
\ea
The latter integral in (\ref{eq:I2rhs}) is finite by {\bf  (I1)}. Also,
\bs
\int_{s}^{2s}  t^{-p_F} \ell  (t)\, \dee  t\ge \ell  (s)\int_{s}^{2s}  t^{-p_F}\, \dee  t=C_ns^{-p_F} f(s)
\es
and so by  {\bf  (I1)} $s^{-p_F} f(s)\to 0$  as $s\to\infty$. Consequently the right hand side of (\ref{eq:I2rhs}) is finite and  {\bf  (I2)} holds.

\end{proof}

\begin{corollary}
Assume $f$ is odd, convex on $(0,\infty )$ and satisfies {\bf  (M)}. If 
$\int_1^{\infty} s^{-(2+2/n)}f (s)\,\dee s<\infty$  then the conclusions of Theorem~\ref{thm:maxsoln} and Theorem~\ref{thm:comp} hold.
\label{cor:convexI1I2}
\end{corollary}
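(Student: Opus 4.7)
The plan is to reduce the hypothesis $\int_1^\infty s^{-(2+2/n)} f(s)\,\dee s < \infty$ to hypothesis {\bf (I1)}, then invoke Lemma~\ref{lem:convexI1I2} to upgrade to {\bf (I2)}, at which point Theorem~\ref{thm:maxsoln} and Theorem~\ref{thm:comp} apply directly.

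First I would observe that since $f$ is convex on $(0,\infty)$ with $f(0)=0$, the difference quotient $t\mapsto f(t)/t$ is non-decreasing on $(0,\infty)$ (a standard fact about convex functions vanishing at the origin). Combined with the oddness of $f$, which gives $f(-t)/(-t)=f(t)/t$, this yields
\[
\ell(s)=\sup_{0<|t|\le s}\frac{f(t)}{t}=\frac{f(s)}{s}\qquad \text{for all } s>0.
\]

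Substituting this identity into {\bf (I1)}, the condition becomes
\[
\int_1^\infty s^{-p_F}\,\frac{f(s)}{s}\,\dee s=\int_1^\infty s^{-(2+2/n)}f(s)\,\dee s,
\]
which is precisely the hypothesis of the corollary. Hence {\bf (I1)} holds. Since $f$ is odd and convex on $(0,\infty)$ and {\bf (M)} and {\bf (I1)} are in force, Lemma~\ref{lem:convexI1I2} applies and yields {\bf (I2)}.

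With {\bf (M)} and {\bf (I2)} established, the hypotheses of both Theorem~\ref{thm:maxsoln} and Theorem~\ref{thm:comp} are met, and their conclusions follow. There is no genuine obstacle here; the only point requiring care is the identification $\ell(s)=f(s)/s$, which rests on the monotonicity of $t\mapsto f(t)/t$ for convex $f$ with $f(0)=0$ — a step already used implicitly in the proof of Lemma~\ref{lem:convexI1I2}.
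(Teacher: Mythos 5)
Your proposal is correct and follows essentially the same route as the paper: the identity $\ell(s)=f(s)/s$ for odd, convex $f$ (used explicitly in the paper's proof of Lemma~\ref{lem:convexI1I2}) converts the stated integral hypothesis into {\bf (I1)}, Lemma~\ref{lem:convexI1I2} then upgrades this to {\bf (I2)}, and Theorems~\ref{thm:maxsoln} and~\ref{thm:comp} apply. The paper leaves the corollary without a written proof precisely because this chain is immediate; your write-up simply makes it explicit.
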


\begin{example}
In the special case of the homogeneous power law $f(u)=|u|^{p-1}u$, $ p>1$,
Corollary~\ref{cor:convexI1I2} is applicable if and only if $p<p_F$. See for example \cite{BC,W79,W80}.
\label{eg:1}
\end{example}

One can also obtain a result like Corollary~\ref{cor:convexI1I2} without requiring $f$ to be odd, provided that one redefines {\bf  (I1)} and  {\bf  (I2)} accordingly.
For example, if $f$ is concave on $(-\infty ,0)$ and convex on  $(0,\infty )$, then $\ell  (s)=\max\{f(s)/s,-f(-s)/s\}$ and  $L(s)=\max\{f^\prime (s),-f^\prime (-s)\}$ a.e. One then replaces {\bf  (I1)} by the pair of integral conditions
\bs
\int_1^{\infty} s^{-(2+2/n)}f (s)\,\dee s<\infty \quad\text{and}\quad -\int_{-\infty}^{-1} (-s)^{-(2+2/n)}f(s)\,\dee s<\infty
\es
and analogously for  {\bf  (I2)}.


 Assuming  {\bf  (M)} holds  we have  shown that   {\bf  (I2)} is sufficient for the well-posedness of (\ref{nhe}) in $L^{1}(\R^n  )$, together with comparison and continuous dependence of solutions. It is natural therefore to ask whether  {\bf  (I2)} is {\it necessary} for  well-posedness. We have a partial result in this direction.

\begin{corollary}
Assume $f$ is odd, convex on $(0,\infty )$ and satisfies {\bf  (M)}. If  $f$ does not satisfy  {\bf  (I2)} then either (\ref{nhe}) is not well-posed   in $L^1(\R^n  )$ or the comparison principle fails.
\label{cor:I2nec}
\end{corollary}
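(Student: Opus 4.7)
The plan is to argue by contraposition, combining Lemma~\ref{lem:convexI1I2} with the necessity direction of the local existence result from \cite{LRSV}. The first ingredient is that Lemma~\ref{lem:convexI1I2} establishes, under the standing hypotheses on $f$ (odd, convex on $(0,\infty)$, satisfying {\bf (M)}), the implication {\bf (I1)}~$\Rightarrow$~{\bf (I2)}. Taking contrapositives, failure of {\bf (I2)} forces failure of {\bf (I1)}. So from now on we may assume {\bf (I1)} fails.

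Next I would suppose, for contradiction, that (\ref{nhe}) is well-posed in $L^1(\R^n)$ and that the comparison principle holds for the associated solution map $\phi\mapsto u(\,\cdot\,;\phi)$. Under well-posedness, $u\equiv 0$ is the unique classical $L^1$-solution for $\phi=0$ (using $f(0)=0$). For an arbitrary $\phi\in L^1_+(\R^n)$, applying the comparison inequality to the ordered pair $0\le\phi$ then yields
\[
0 \;=\; u(t;0)\;\le\; u(t;\phi)
\]
throughout the interval of existence. Hence $u(t;\phi)$ is a non-negative classical $L^1$-solution of (\ref{nhe}) for every non-negative $\phi\in L^1_+(\R^n)$.

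This, however, is precisely the local existence property that is characterised in \cite{LRSV}: by the necessity direction of the main existence theorem there, the property that every $\phi\in L^1_+(\R^n)$ launches a non-negative classical $L^1$-solution implies the integral condition (\ref{eq:withFs}), i.e.\ {\bf (I1)}. This contradicts the first step, so at least one of well-posedness in $L^1(\R^n)$ or the comparison principle must fail, which is the claim.

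The only delicate point I anticipate is matching solution concepts: one must verify that the non-negative classical $L^1$-solutions produced above fall within the class of solutions to which the necessity result of \cite{LRSV} applies. Once this identification is made — and it should be essentially immediate from Definition~\ref{def:wellposed} — the corollary follows without further calculation, as a direct consequence of Lemma~\ref{lem:convexI1I2} and the cited necessity statement.
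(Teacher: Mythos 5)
Your argument is correct and follows essentially the same route as the paper: reduce $\neg$\textbf{(I2)} to $\neg$\textbf{(I1)} via Lemma~\ref{lem:convexI1I2}, invoke the sharp existence result of \cite{LRSV} to produce a non-negative datum admitting no non-negative classical $L^1$-solution, and conclude by comparing with the zero solution. The paper phrases this directly (the unique solution must change sign, so comparison with $u(t;0)=0$ fails) rather than by contradiction, but the logical content and the two key ingredients are identical.
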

\begin{proof} By Lemma~\ref{lem:convexI1I2}, if $f$  does not satisfy  {\bf  (I2)} then does not satisfy {\bf  (I1)}. Therefore, by \cite[Theorem 5.1]{LRSV} there exists a non-negative initial condition $\phi\in L^1(\R^n  )$ such that (\ref{nhe}) does not possess
a local {\em non-negative}  integral solution (and hence no non-negative classical $L^1$-solution). It follows that if (\ref{nhe}) is  well-posed
   in $L^1(\R^n  )$ then the corresponding unique classical $L^1$-solution $u(t;\phi )$ must be sign-changing on every small time interval $(0,T)$. Comparison with  $u(t;0 )=0$ consequently fails.
\end{proof}

Recalling Corollary~\ref{cor:convexI1I2} we may  combine the results of Theorem~\ref{thm:maxsoln} and Theorem~\ref{thm:globalL1} in the following special case.

\begin{corollary}
Suppose $f$ is  odd,  convex on $(0,\infty )$ and satisfies {\bf  (M)}.   If
\bs
\int_{0}^{\infty}\frac{f(s)}{s^{2+2 /n}}\, \dee  s <\infty ,\label{eq:easyffin}
\es
then (\ref{nhe}) is well-posed in $L^1(\R^n)$ and all classical $L^1$-solutions having sufficiently small initial data in $L^1(\R^n)$ are global in time,  decaying uniformly to zero like $O\left(t^{-n/2}\right)$ as $t\to\infty$.
\label{cor:locglob}
\end{corollary}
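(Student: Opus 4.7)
The plan is to reduce the single integral hypothesis $\int_0^\infty s^{-(2+2/n)}f(s)\,\dee s<\infty$ to the conjunction of the three integral conditions $\mathbf{(I1)}$, $\mathbf{(I2)}$ and $\mathbf{(I3)}$ already developed in the paper, after which the two conclusions follow directly by invoking Corollary~\ref{cor:convexI1I2} and Theorem~\ref{thm:globalL1}.

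First I would exploit the structural hypotheses on $f$. Since $f$ is odd and convex on $(0,\infty)$ with $f(0)=0$, the function $s\mapsto f(s)/s$ is non-decreasing on $(0,\infty)$, so by the defining formula \eqref{eq:ell} one has $\ell(s)=f(s)/s$ for all $s>0$. Consequently the integrands in $\mathbf{(I1)}$ and $\mathbf{(I3)}$ reduce to $s^{-p_F}\ell(s)=s^{-(2+2/n)}f(s)$, and the hypothesis
\[
\int_0^\infty \frac{f(s)}{s^{2+2/n}}\,\dee s<\infty
\]
splits as $\int_0^1+\int_1^\infty$ to yield $\mathbf{(I3)}$ and $\mathbf{(I1)}$ simultaneously. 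Lemma~\ref{lem:convexI1I2} then promotes $\mathbf{(I1)}$ to $\mathbf{(I2)}$ via the oddness and convexity of $f$, so all three integral conditions are at our disposal.

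With $\mathbf{(M)}$ and $\mathbf{(I2)}$ in hand, Theorem~\ref{thm:maxsoln} (which packages Theorem~\ref{thm:exist} and Theorem~\ref{thm:unique}) shows that \eqref{nhe} is well-posed in $L^1(\R^n)$ in the sense of Definition~\ref{def:wellposed}, yielding the first assertion. For the second, we are now in precisely the hypothesis set of Theorem~\ref{thm:globalL1}: $\mathbf{(M)}$, $\mathbf{(I2)}$ and $\mathbf{(I3)}$ all hold. Fixing any $A>1$, that theorem provides a $\delta=\delta(A)>0$ such that every $\phi\in L^1(\R^n)$ with $\|\phi\|_1\le\delta$ has $T_{\rm max}(\phi)=\infty$ together with the pointwise bound $AS(t)\phi^-\le u(t;\phi)\le AS(t)\phi^+$ and the quantitative decay
\[
\|u(t;\phi)\|_\infty\le At^{-n/2}\|\phi\|_1,\qquad t>0,
\]
which is exactly the advertised $O(t^{-n/2})$ uniform decay.

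There is no genuine obstacle in this argument; the only delicate point is the identification $\ell(s)=f(s)/s$, which requires both the oddness (so that the supremum over $0<|t|\le s$ in \eqref{eq:ell} is attained on the positive side) and the convexity on $(0,\infty)$ (so that $f(s)/s$ is non-decreasing and therefore coincides with the supremum). Once this identification is in place, splitting the single integral at $s=1$ and citing the two earlier theorems closes the argument.
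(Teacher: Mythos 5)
Your proposal is correct and follows essentially the same route the paper intends (the paper gives no separate proof, stating only that the corollary "combines" Corollary~\ref{cor:convexI1I2} and Theorem~\ref{thm:globalL1}): the identification $\ell(s)=f(s)/s$ from oddness and convexity, the split of the integral at $s=1$ into \textbf{(I1)} and \textbf{(I3)}, the upgrade to \textbf{(I2)} via Lemma~\ref{lem:convexI1I2}, and the appeal to Theorems~\ref{thm:maxsoln} and~\ref{thm:globalL1}. Nothing further is needed.
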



\subsection{Positive Solutions}\label{sec:pos}

Here we outline some consequences relating specifically to  {\em non-negative} solutions.
The results of previous sections are  easily paralleled by replacing $L^1(\R^n  )$  throughout by $L^1_+(\R^n  )$, the cone of non-negative functions in  $L^1(\R^n  )$. In particular  the definitions of solution and well-posedness in Definition~\ref{def:wellposed} are now made with respect to
 $L^1_+(\R^n  )$ rather than  $L^1(\R^n  )$, with $\phi\in L^1_+(\R^n  )$.

First we replace {\bf  (M)} by
\begin{flalign*}
\quad\text{\bf  (M)$_+$}\quad f\colon [0,\infty )\to [0,\infty ) \text{ is locally Lipschitz continuous, non-decreasing and } f(0)=0,&&
\end{flalign*}
and  (\ref{eq:ell}) by
\bs
\ell_+(s)=\sup_{0< t\le s}\frac{f(t)}{t} \quad (s>0),\qquad \ell_+ (0)=0.
\label{eq:ellplus}
\es
We then replace  $\ell$ by $\ell_+$ in the  integral condition  {\bf  (I1)}: 
\begin{flalign*}
\quad\text{\bf  (I1)$_+$}\quad\int_1^{\infty}s^{-p_F}\ell_+ (s)\,\dee s<\infty.&&
\end{flalign*}
The function $\loweru $ in Theorem~\ref{thm:exist} is then obtained by monotone iteration of the  integral subsolution $v=AS(t)\phi^-=0$, since now $\phi^-=0$.  Likewise we replace $L$ in (\ref{eq:L}) by
\bs
L_+(s)=\sup_{\substack{ 0<u,v\le s, \\ u\neq v}}\frac{f(u)-f(v)}{u-v} \quad (s>0),\qquad L_+ (0)=0
\label{eq:Lplus}
\es
and $L$ by $L_+$ in  {\bf  (I2)}:
\begin{flalign*}
\quad\text{\bf  (I2)$_+$}  \quad \int_1^{\infty}s^{-p_F}L_+ (s)\,\dee s<\infty.&&
\end{flalign*}
One readily deduces that if $f$ satisfies {\bf  (M)}$_+$ and {\bf  (I2)}$_+$ then for every $\phi\in L^1_+(\R^n  )$ there exists a 
$T_{\rm max}(\phi)> 0$ and a unique, non-negative classical $L^1$-solution $u (t;\phi )$ on $[0, T_{\rm max}(\phi))$.  Furthermore, if
$T_{\rm max}(\phi)< \infty$ then $\|u (t;\phi )\|_{1}\to\infty$ as $t\to T_{\rm max}(\phi)$. Similarly one obtains continuous dependence and comparison of non-negative solutions. This yields the analogues of Theorem~\ref{thm:maxsoln} and Theorem~\ref{thm:comp} in
$L^1_+(\R^n  )$.


The counterparts of Theorems~\ref{thm:global} and \ref{thm:globalL1} regarding   non-negative global solutions satisfying $0\le u(t;\phi)\le AS(t)\phi$ also follow in exactly the same way, on replacing    {\bf  (I3)}  with 
\begin{flalign*}
\quad\text{\bf  (I3)$_+$}  \quad \int_0^{1}s^{-p_F}\ell_+ (s)\,\dee s<\infty.&&
\end{flalign*}
In fact  the  r\^ole of {\bf  (I3)$_+$} is known to be important in determining whether positive classical solutions decay to zero or whether they
 `grow-up' \cite{KST}, and possibly blow-up.


\begin{example}
Let $f\colon [0,\infty )\to [0,\infty )$ be given by $f(0)=0$ and $f(u)=u^{p_F}g(u)$ for $u>0$, where
\bs
g(u)=\left\lbrace 
\begin{array}{ll}
\left[\ln(1/u)\right]^{-\gamma}, & 0<u<a,\\
g_0(u), & a\le u\le b,\\
\left[\ln{(\e+u)}\right]^{-\beta} & u>b,
\end{array}\right.
\es
$\gamma , \beta >1$ are fixed and $a,b>0$ will be chosen below. This example  combines those of  \cite[Section 4.4]{LRSV} on local existence of $L^1$-solutions and  \cite[Example 5.1]{KST} on global existence of classical solutions, in order to illustrate some of our  results. 

Choosing $a$ sufficiently small, $b$ sufficiently large and $g_0$ as a monotonic interpolant, we can ensure that $f$ satisfies  {\bf  (M)}$_+$ and moreover that $\ell_+(u)=f(u)/u$ on $(0,a)$ and $L_+(u)=f^\prime (u)$ for $u>b$. The choice of  $\beta >1$ and  $\gamma >1$  then ensure that 
  {\bf  (I2)}$_+$ and  {\bf  (I3)}$_+$ hold, respectively. Consequently (\ref{nhe}) is well-posed in $L^1_+(\R^n)$ with continuous dependence upon initial conditions and the comparison principle also assured. Furthermore, solutions with sufficiently small initial data in $L^1_+(\R^n)$ exist globally in time. 

We emphasise that the  results available in  \cite{BC,W79,W80} are not sufficiently sharp to be able to deduce the kind of well-posedness results that we obatin here for source terms such as this. 
\label{eg:2}
\end{example}

Similarly to Lemma~\ref{lem:convexI1I2} it again follows that if $f$  satisfies  {\bf  (M)}$_+$ and {\bf  (I1)}$_+$ and  is convex, then $f$  satisfies
    {\bf  (I2)}$_+$.  We therefore have the following interesting result:

\begin{corollary}
Suppose $f$ satisfies  {\bf  (M)}$_+$ and is convex. Then (\ref{nhe}) is well-posed in   $L^1_+(\R^n  )$ if and only if
 ${\int_1^{\infty} s^{-(2+2/n)}f (s)\,\dee s<\infty}$. Moreover, if  ${\int_1^{\infty} s^{-(2+2/n)}f (s)\,\dee s<\infty}$ then continuous dependence and comparison also hold in the sense of Theorem~\ref{thm:comp}.
\label{cor:summary}
\end{corollary}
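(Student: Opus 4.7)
The plan is to establish the equivalence by splitting it into two implications, both of which are essentially bookkeeping on top of results already proved in the paper.

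First I would observe that under convexity of $f$ together with \textbf{(M)}$_+$, the function $s\mapsto f(s)/s$ is non-decreasing on $(0,\infty )$, and therefore $\ell_+(s)=f(s)/s$ for all $s>0$. Consequently the integrability hypothesis
$\int_1^{\infty} s^{-(2+2/n)} f(s)\,\dee s<\infty$ is simply a restatement of \textbf{(I1)$_+$}. This observation is the hinge of the whole argument.

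For the sufficiency direction, assume the integral condition holds, so \textbf{(I1)$_+$} is satisfied. The positive-cone analogue of Lemma~\ref{lem:convexI1I2} applies: since $f$ is locally Lipschitz and convex on $[0,\infty )$, it is a.e.\ differentiable with $L_+(s)=f^\prime(s)$, and one repeats verbatim the integration-by-parts argument of Lemma~\ref{lem:convexI1I2} (replacing $\ell ,L$ by $\ell_+,L_+$ and dropping the use of oddness, which was only invoked there to pass from the two-sided suprema to the one-sided ones) to conclude that \textbf{(I2)$_+$} also holds. With \textbf{(M)$_+$} and \textbf{(I2)$_+$} in hand, the $L^1_+$-versions of Theorem~\ref{thm:maxsoln} and Theorem~\ref{thm:comp} outlined in Section~\ref{sec:pos} deliver well-posedness of (\ref{nhe}) in $L^1_+(\R^n)$ together with the continuous dependence and comparison statements.

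For the necessity direction, I argue by contraposition. Suppose the integral $\int_1^{\infty} s^{-(2+2/n)}f(s)\,\dee s$ is infinite; equivalently, \textbf{(I1)$_+$} fails. I would then invoke \cite[Theorem 5.1]{LRSV}, which asserts that failure of \textbf{(I1)$_+$} under \textbf{(M)$_+$} produces a non-negative initial datum $\phi\in L^1_+(\R^n)$ for which no local non-negative integral (and hence no classical $L^1_+$) solution of (\ref{nhe}) exists. This directly contradicts well-posedness of (\ref{nhe}) in $L^1_+(\R^n )$, which completes the proof.

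The only nontrivial point is checking that Lemma~\ref{lem:convexI1I2} really goes through in the one-sided setting without the oddness assumption; this is the step I would verify most carefully, but since the proof only uses convexity of $f$ on $(0,\infty )$ (through $L_+=f^\prime$ and $\ell_+=f(\cdot)/(\cdot)$), the adaptation is routine and no genuine obstacle is expected.
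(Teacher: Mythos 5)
Your proposal is correct and follows essentially the same route as the paper: the paper likewise obtains necessity from \cite[Theorem 5.1]{LRSV} via the identification $\ell_+(s)=f(s)/s$ under convexity, and sufficiency from the positive-cone analogue of Lemma~\ref{lem:convexI1I2} combined with the $L^1_+$ versions of Theorems~\ref{thm:maxsoln} and~\ref{thm:comp} discussed in Section~\ref{sec:pos}. Your extra care in checking that the integration-by-parts argument of Lemma~\ref{lem:convexI1I2} survives the passage to the one-sided quantities $\ell_+$, $L_+$ without oddness is exactly the point the paper leaves implicit.
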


\begin{proof}
If ${\int_1^{\infty} s^{-(2+2/n)}f (s)\,\dee s=\infty}$ then
by \cite[Theorem 5.1]{LRSV} there exists a  $\phi\in L^1_+(\R^n  )$ such that (\ref{nhe}) does not possess
a local {\em non-negative}  integral solution (and hence no non-negative classical $L^1$-solution). Consequently (\ref{nhe}) is not well-posed in
  $L^1_+(\R^n  )$.

Conversely, if 
${\int_1^{\infty} s^{-(2+2/n)}f (s)\,\dee s<\infty}$, then by Lemma~\ref{lem:convexI1I2}, Theorem~\ref{thm:maxsoln}, Theorem~\ref{thm:comp}   and  the discussion above, (\ref{nhe}) is well-posed in   $L^1_+(\R^n  )$ and enjoys the continuous dependence and comparison properties stated. 
\end{proof}


\section{Concluding remarks}\label{sec:conc}

We have established new results on the local well-posedness and global continuation of classical $L^1$-solutions of semilinear heat equations, extending those of \cite{BC, LRSV, W79,W80,W81} under less restrictive growth conditions on the source term $f$. Furthermore, we have also obtained   continuous dependence and comparison results in this more general setting. Here we discuss several extensions to our work which seem to us to be readily achievable. For expositional reasons  we have chosen not to present the details here; instead  we outline the necessary steps.

We have derived results only for  the Cauchy problem on $\R^n$. However, our results also hold in $L^1(\Om )$ for  bounded domains $\Omega$ with homogeneous Dirichlet or Neumann boundary conditions. The proofs require only minor (but frequent) modifications, along the lines of those in  \cite{LRSV},
and following the sub-supersolution methods used here with appropriate changes to the heat semigroup to incorporate the boundary conditions. In fact, by following the same argument as in \cite[Remark 7.2]{BC}  one can obtain uniqueness of  classical $L^1$-solutions in the larger class
 $$ C\left([0,T],L^{1}(\Omega  )\right)\cap L^{\infty}_{\loc}\left((0,T),L^{p_F}(\Omega  )\right)$$
 under the same hypotheses as Theorem~\ref{thm:unique}.

 Our well-posedness  results in  $L^1(\R^n  )$ also  carry through with minor modification if one replaces the Laplacian operator by the fractional Laplacian and consider instead the problem
 \bs\label{fracnhe}
u_t=-(-\Lap )^{\B/2} u+f(u),\qquad u(0)=\phi\in L^1(\R^n  )
\es
for $\B\in (0,2]$. The r\^ole of the Fujita-type exponent $p_F$ is then replaced by $p_F(\B )=1+\B /n$. Local and global existence results
 then follow by comparison with $AS_\B (t)\phi^{\pm}$ ($A>1$), where $S_\B (t)$ denotes the semigroup generated by
 the fractional Laplacian.  An appropriate  integral sub-supersolution existence theorem is easily obtained by adapting  those in \cite{RS2} and
  suitable monotonicity and smoothing properties are also available.
 Some work along these lines for local existence/non-existence of non-negative solutions can be found in \cite{Li}. One may then adapt our methods here to obtain the analogous uniqueness, continuous dependence and comparison results.





 \section*{Acknowledgements}
  This work was partially supported under the Research in Pairs scheme   at the Centre International de Rencontres Math\'ematiques, Luminy. 
 MS was also partially supported by NCN grant    2017/26/D/ST1/00614. The authors would also like to thank the anonymous reviewers for their helpful suggestions to improve the final version of this manuscript.

\bibliographystyle{model1num-names}
\bibliography{<your-bib-database>}

\end{document}